\title{An introduction to upper half plane polynomials}
\author{Steve Fisk}
\date{\today}
\newtheorem{lemma}{Lemma}
\newtheorem{fact}{Fact}
\newtheorem{theorem}{Theorem}
\newtheorem{question}{Question}
\theoremstyle{definition}
\newtheorem{definition}{Definition}
\newtheorem{remark}{Remark}
\newtheorem{example}{Example}
\newenvironment{aside}{}{}
\newcommand{\Mobius}{M\"{o}bius}
\newcommand{\aaa}{\mathbf{a}}
\newcommand{\bbb}{\mathbf{b}}
\newcommand{\complexes}{{\mathbb C}}
\newcommand{\diffi}{{\text{{\textbf{{I}}}}}} 
\newcommand{\diffj}{{\text{{\textbf{{J}}}}}} 
\newcommand{\imag}{\boldsymbol{\imath}}
\newcommand{\ulace}{\ensuremath{\stackrel{U}{\longleftarrow}}}
\newcommand{\reals}{{\mathbb R}}
\newcommand{\sdiffi}{{\text{{\textbf{\tiny{I}}}}}} 
\newcommand{\sdiffj}{{\text{{\textbf{\tiny{J}}}}}} 
\newcommand{\sdiffk}{{\text{{\textbf{\tiny{K}}}}}} 
\newcommand{\upbar}[1]{\widehat{\text{\texttt{UP}}}_{#1}}
\newcommand{\up}[1]{\text{\texttt{U}}_{#1}\,(\complexes)}
\newcommand{\rup}[1]{{\text{\texttt{U}}_{#1}}}
\newcommand{\xx}{\mathbf{x}}
\newcommand{\yy}{\mathbf{y}}
\newcommand{\uu}{\mathbf{u}}
\newcommand{\vv}{\mathbf{v}}
\newcommand{\smalltwodet}[4]{%
\left|\begin{smallmatrix} #1&#2 \\ #3&#4\end{smallmatrix}\right|}
\begin{document}
\maketitle

\begin{aside}
Polynomials with all real roots have many interesting and useful
properties. The purpose of this article is to introduce a
generalization to polynomials in many variables and with complex coefficients
\cites{bbs,bb2,bbl,bbs-johnson,branden-hpp,fisk,wagner}. 
\end{aside}

\begin{definition}
  $\up{d} = \left\{\text{\parbox{3.5in}{All polynomials
      $f(x_1,\dots,x_d)$ with complex coefficients such that
      $f(\sigma_1,\dots,\sigma_d)\ne0$ for all
      $\sigma_1,\dots,\sigma_d$ in the upper half plane. If we don't
      need to specify $d$ we simply write $\up{}$.}}\right.$ We call
such polynomials upper half plane polynomials, or simply \emph{upper polynomials}.

\end{definition}

\begin{aside}
For example, $x_1+\cdots+ x_d\in\up{d}$. This follows from the fact
that the upper half plane is a cone, so if $\sigma_1,\dots,\sigma_d$
are in the upper half plane then so is their sum.

Another example is $x_1x_2-1$. If $\sigma_1$ and $\sigma_2$ are in the
upper half plane then
$\sigma_1\sigma_2\in\complexes\setminus(0,\infty)$, so
$\sigma_1\sigma_2-1$ is not zero.

$\up{1}$ is easily described. It is all polynomials in one variable
whose roots are either real, or lie in the lower half plane.

It is important to observe that the zero polynomial is not in
$\up{}$. This is unfortunate, since it causes many conclusions to be
of the form ``\dots $\in\up{}\cup\{0\}$\dots''.
\end{aside}

\textbf{Conventions:} $d$ is a positive integer, $\imag=\sqrt{-1}$, and
$y$ is a variable distinct from $x_1,\dots,x_d$. We use the following notation

\begin{align*}
  \xx &= (x_1,\dots,x_d) &
  \yy &= (y_1,\dots,y_d)\\
  \partial_\xx&= (\partial_{x_1},\cdots,\partial_{x_d}) &
  f(\xx) &= f(x_1,\dots,x_d) \\
  \diffi &= (i_1,\dots,i_d) & \xx^\sdiffi &= (x_1^{i_1},\dots,x_d^{i_d})\\
  \diffj &= (j_1,\dots,j_d) & \xx^\sdiffj &= (x_1^{j_1},\dots,x_d^{j_d})
\end{align*}

\section{Complex Coefficients}
\label{sec:complex-coeff}

\begin{fact}\label{elem}
  Suppose $f(\xx)\in\up{d}$.
  \begin{enumerate}
  \item If $\alpha\ne0$ then $\alpha f(\xx)\in\up{d}$.  
  \item If $a_1>0,\dots,a_d>0$ then $f(a_1x_1,\dots,a_dx_d)\in\up{d}$.
    \item If $\Im(\sigma_1)>0,\dots,\Im(\sigma_d)>0$ then
      $f(x_1+\sigma_1,\dots,x_d+\sigma_d)\in\up{d}$.
    \item If $\Im(\sigma)>0$ then $f(\sigma,x_2,\dots,x_d)\in\up{}$.
    \item $f(x_1+y,x_2,\dots,x_d)\in\up{}$.
    \item $f(x,x,x_3,\dots,x_d)\in\up{}$.
  \end{enumerate}
\end{fact}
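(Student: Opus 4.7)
The plan is to handle all six parts by the same elementary strategy: in each case, take arbitrary upper-half-plane arguments for the transformed polynomial, check that the resulting tuple passed to $f$ still lies in the upper half plane, and then invoke $f\in\up{d}$ to conclude nonvanishing. Nothing deeper is needed, because each operation is a linear-affine substitution that preserves positivity of the imaginary part.

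Concretely, for (1) the value $\alpha f(\sigma_1,\dots,\sigma_d)$ is a product of two nonzero numbers. For (2), if $\Im(\sigma_i)>0$ and $a_i>0$ then $\Im(a_i\sigma_i)=a_i\Im(\sigma_i)>0$, so $(a_1\sigma_1,\dots,a_d\sigma_d)$ is again in the upper half plane. For (3), the upper half plane is closed under addition (it is a cone, as already noted in the introductory discussion), so $\Im(\sigma_i+\tau_i)>0$ whenever both summands have positive imaginary part. Part (4) is immediate from the definition: we are simply fixing one coordinate at a legal value and leaving $d-1$ coordinates free, so the restricted polynomial is tested against exactly the same kind of tuples as $f$ itself.

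Parts (5) and (6) are the two that slightly enlarge or reduce the number of variables, but they reduce to the same principle. For (5), an upper-half-plane tuple $(\sigma_1,\sigma_2,\dots,\sigma_d,\tau)$ for the $(d+1)$-variable polynomial $f(x_1+y,x_2,\dots,x_d)$ gets sent by $f$ to $(\sigma_1+\tau,\sigma_2,\dots,\sigma_d)$, and $\Im(\sigma_1+\tau)=\Im(\sigma_1)+\Im(\tau)>0$ by the cone property, so this lies in the domain where $f$ is nonzero. For (6), an upper-half-plane tuple $(\sigma,\sigma_3,\dots,\sigma_d)$ for the $(d-1)$-variable polynomial $f(x,x,x_3,\dots,x_d)$ gets sent to $(\sigma,\sigma,\sigma_3,\dots,\sigma_d)$, which is still an upper-half-plane tuple of length $d$.

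There is no real obstacle here: every part is a one-line verification that the given substitution maps upper-half-plane tuples to upper-half-plane tuples. The only thing worth being careful about is bookkeeping the ambient variable count (especially in (5), where the polynomial gains a variable $y$, and in (4) and (6), where it loses one), and recognizing that parts (2), (3), (5), (6) all rest on the single structural fact that $\{z\in\complexes:\Im z>0\}$ is closed under addition and under multiplication by positive reals.
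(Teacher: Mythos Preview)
Your proposal is correct and matches the paper's approach exactly: the paper's proof consists of the single sentence ``These are all immediate from the definition,'' and your write-up simply spells out, part by part, why that is so. There is nothing to add.
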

\begin{proof}
These are all immediate from the definition.  
\end{proof}

\begin{fact}\label{zero}
  If $f(\xx)\in\up{}$ then $f(a,x_2,\dots,x_d)\in\up{}\cup\{0\}$ if $a\in\reals$.
\end{fact}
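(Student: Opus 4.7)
The plan is to approach $a$ from the open upper half plane and apply Hurwitz's theorem. For $\epsilon>0$, set
\[
g_\epsilon(x_2,\dots,x_d) := f(a+\imag\epsilon,\, x_2,\dots,x_d);
\]
by Fact \ref{elem}(4), $g_\epsilon\in\up{d-1}$. Since the coefficients of $g_\epsilon$ depend polynomially on $\epsilon$, they converge as $\epsilon\to0^+$ to the coefficients of $g_0 := f(a,x_2,\dots,x_d)$, so $g_\epsilon\to g_0$ uniformly on compact subsets of $\complexes^{d-1}$.

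Assuming $g_0\not\equiv 0$, I must show $g_0$ does not vanish when each $x_i$ has positive imaginary part. Suppose, for contradiction, that $g_0(\tau)=0$ at some such $\tau=(\tau_2,\dots,\tau_d)$. Since $g_0$ is a nonzero polynomial, I can pick $\sigma=(\sigma_2,\dots,\sigma_d)$ with every $\Im\sigma_i>0$ and $g_0(\sigma)\neq 0$. Parametrize the complex line through these points by $\gamma(z):=\tau+z(\sigma-\tau)$, and let $V\subset\complexes$ be the connected component of
\[
\{\,z\in\complexes : \Im\gamma_i(z)>0\text{ for all }i=2,\dots,d\,\}
\]
containing the real segment $[0,1]$; because each $\Im\gamma_i$ is a nonconstant real-affine function of $z$ that is positive on $[0,1]$, this set is a nonempty open connected subset of $\complexes$ containing both $0$ and $1$.

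Now consider the entire functions $\phi_\epsilon(z):=g_\epsilon(\gamma(z))$. Since $\gamma(V)$ lies coordinatewise in the open upper half plane and $g_\epsilon\in\up{d-1}$, each $\phi_\epsilon$ is nowhere zero on $V$; and $\phi_\epsilon\to\phi_0:=g_0\circ\gamma$ uniformly on compact subsets of $V$. By the classical one-variable Hurwitz theorem, $\phi_0$ is either identically zero on $V$ or nowhere zero on $V$. But $\phi_0(0)=g_0(\tau)=0$ and $\phi_0(1)=g_0(\sigma)\neq 0$, contradiction.

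The only real subtlety is the reduction from several variables to one: restricting along a complex line joining a putative zero of $g_0$ in the upper half polydomain to a point where $g_0$ is nonzero converts the several-variable non-vanishing of the approximants $g_\epsilon$ into exactly the hypothesis of the classical one-variable Hurwitz theorem. The rest is immediate from Fact \ref{elem}(4) and continuity of coefficients.
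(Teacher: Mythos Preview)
Your proof is correct. Both your argument and the paper's rest on the same idea---approximate $f(a,x_2,\dots,x_d)$ by genuine upper polynomials and invoke Hurwitz---but the approximating families differ. The paper reduces to $a=0$ and uses the real rescaling $g_r(\xx)=f(x_1/r,x_2,\dots,x_d)\in\up{d}$ (via Fact~\ref{elem}(2)), letting $r\to\infty$; you instead substitute $a+\imag\epsilon$ and use Fact~\ref{elem}(4) to land directly in $\up{d-1}$. Your route has the virtue of immediately working in the correct number of variables, and you are explicit about reducing the several-variable Hurwitz step to the classical one-variable statement via restriction to a complex line---a point the paper simply asserts. One small quibble: the functions $\Im\gamma_i(z)$ need not be \emph{nonconstant} (this fails when $\sigma_i=\tau_i$), but in that case $\Im\gamma_i\equiv\Im\tau_i>0$ everywhere, and your set $V$ is still open, connected, and contains $[0,1]$---indeed it is convex, being an intersection of open half-planes.
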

\begin{proof}
  It suffices to assume $a=0$. 
  Let $g_r(\xx) = f(x_1/r,x_2,\dots,x_d)$. Since $\lim_{r\rightarrow
    \infty} g_r(\xx) = f(0,x_2,\dots,x_d)$   the Hurwitz theorem (see
  below) implies the conclusion, where $\Omega$ is the upper half plane.
\end{proof}

\begin{theorem}[Hurwitz]
  Let $(f_n)$ be a sequence of functions which are all analytic and
  without zeros in a region $\Omega$. Suppose, in addition, that
  $f_n(z)$ tends to $f(z)$, uniformly on every compact subset of
  $\Omega$. Then $f(z)$ is either identically zero or never equal to
  zero in $\Omega$.
\end{theorem}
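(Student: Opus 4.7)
The plan is to argue by contradiction: assume $f\not\equiv 0$ on $\Omega$ and deduce that $f$ has no zero there. A preliminary observation is that the uniform-on-compacta limit of holomorphic functions is itself holomorphic (Weierstrass's convergence theorem), so $f$ is analytic on $\Omega$ and we may sensibly talk about its zeros.

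Suppose for contradiction that $f(z_0)=0$ for some $z_0\in\Omega$. Because $\Omega$ is a region and $f$ is analytic but not identically zero, its zeros are isolated, so I can pick a closed disk $\overline{D}\subset\Omega$ centred at $z_0$ whose boundary circle $\gamma$ contains no zero of $f$. Since $|f|$ is continuous on the compact set $\gamma$, it attains there a positive minimum $m>0$.

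Now apply Rouch\'e's theorem. Uniform convergence on the compact set $\gamma$ produces an $N$ such that $|f_n-f|<m\le|f|$ on $\gamma$ whenever $n\ge N$. Rouch\'e then tells us that $f=(f-f_n)+f_n$ and $f_n$ have the same number of zeros inside $\gamma$. But each $f_n$ is nowhere zero on $\Omega$, hence has no zero inside $\gamma$, whereas $f$ vanishes at $z_0$; this contradiction forces $f$ to be zero-free on $\Omega$ as soon as it is not identically zero.

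The critical step is the very first one: extracting, from ``isolated zeros,'' a definite circle $\gamma$ on which $|f|$ is bounded below, so that the small perturbation $f_n-f$ cannot create or destroy zeros inside. After that Rouch\'e does all the work. A slightly more hands-on alternative replaces Rouch\'e by the argument principle: write the zero count inside $\gamma$ as $\frac{1}{2\pi\imag}\oint_\gamma f_n'(z)/f_n(z)\,dz$; uniform convergence on a neighbourhood of $\gamma$ yields $f_n'\to f'$ uniformly on $\gamma$ via Cauchy's formula, the denominators stay bounded away from $0$, so the integrands converge uniformly, the integer-valued integrals converge, and the same contradiction appears.
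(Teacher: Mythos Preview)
Your argument is the standard and correct proof of Hurwitz's theorem via Rouch\'e (with the argument-principle variant as a valid alternative). Note, however, that the paper does not supply its own proof of this statement: Hurwitz's theorem is merely quoted there as a known tool and used to justify closure results such as Fact~\ref{zero}. So there is nothing in the paper to compare against; your write-up simply fills in a proof that the author chose to cite rather than prove.
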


\begin{fact}\label{multiplicatiion}
  $\up{}$ is closed under multiplication and extracting factors. 
\begin{quote}
  That is,
  $f(\xx)g(\xx)\in\up{}$ iff $f(\xx)\in\up{}$ and $g(\xx)\in\up{}$.
\end{quote}
\end{fact}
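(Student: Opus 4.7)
The plan is to verify the two directions of the biconditional separately, each following directly from the definition of $\up{}$ together with the fact that $\complexes$ is an integral domain.

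For the implication $f,g\in\up{}\implies fg\in\up{}$, first note that since $f$ and $g$ are nonzero polynomials, so is their product $fg$. Given any point $(\sigma_1,\dots,\sigma_d)$ with each $\sigma_i$ in the upper half plane, we have $f(\sigma_1,\dots,\sigma_d)\ne 0$ and $g(\sigma_1,\dots,\sigma_d)\ne 0$ by hypothesis, and hence their product in $\complexes$ is nonzero.

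For the converse, suppose $fg\in\up{}$. Then $fg$ is a nonzero polynomial, so neither $f$ nor $g$ is the zero polynomial. If $f$ failed to be in $\up{}$, there would exist $\sigma_1,\dots,\sigma_d$ in the upper half plane with $f(\sigma_1,\dots,\sigma_d)=0$; but then $(fg)(\sigma_1,\dots,\sigma_d)=0$ as well, contradicting $fg\in\up{}$. Hence $f\in\up{}$, and by symmetry $g\in\up{}$.

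There is essentially no obstacle here: unlike Fact~\ref{zero}, no limiting argument or appeal to Hurwitz's theorem is needed, because the argument uses only that the value of a product of complex numbers is the product of the values, and that $\complexes$ has no zero divisors. The only point worth flagging is the convention that the zero polynomial is excluded from $\up{}$, which is why one must separately check that $f$ and $g$ are nonzero (in the $\Rightarrow$ direction) and that $fg$ is nonzero (in the $\Leftarrow$ direction); both follow immediately from the corresponding hypotheses.
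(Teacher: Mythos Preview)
Your proof is correct and is exactly the approach taken in the paper, which simply remarks that the fact is immediate from the definition of $\up{}$. You have merely spelled out the details (including the careful handling of the zero polynomial), but there is no substantive difference.
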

\begin{proof}
  This is also immediate from the definition of $\up{}$.
\end{proof}

\begin{aside}
Another construction that preserves upper polynomials is the reversal of one
variable. Note the introduction of a minus sign.
\end{aside}

\begin{fact}\label{reversal-1}
  If $\sum_0^n f_i(\xx)\, y^i \in\up{}$ then $\sum_0^n f_i(\xx)\,(-y)^{n-i}\in\up{}$.
\end{fact}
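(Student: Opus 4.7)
The plan is to express the reversed polynomial as a Möbius substitution into the original, then invoke invariance of the upper half plane under that substitution.

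Write $F(\xx,y) = \sum_{i=0}^n f_i(\xx)\, y^i$ and $G(\xx,y) = \sum_{i=0}^n f_i(\xx)\,(-y)^{n-i}$. The first step is the algebraic identity
\[
G(\xx,y) \;=\; (-y)^n\, F\!\left(\xx,\, -1/y\right),
\]
valid for $y \ne 0$, which I would verify by factoring $(-y)^n$ out of the sum defining $G$ and matching exponents.

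Next, I would observe that the fractional linear map $\tau \mapsto -1/\tau$ carries the upper half plane to itself: writing $\tau = a+b\imag$ with $b>0$, a direct computation gives $\Im(-1/\tau) = b/|\tau|^2 > 0$. (Equivalently, this is a standard Möbius transformation of the upper half plane.)

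Combining these, suppose $\sigma_1,\dots,\sigma_d,\tau$ all lie in the upper half plane. Then $\tau \ne 0$, so $(-\tau)^n \ne 0$, and $-1/\tau$ also lies in the upper half plane, whence $F(\sigma_1,\dots,\sigma_d,-1/\tau) \ne 0$ by hypothesis on $F$. Therefore $G(\sigma_1,\dots,\sigma_d,\tau) \ne 0$, proving $G \in \up{}$.

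There is no real obstacle here; the only minor care needed is that the identity relating $G$ to $F$ involves $1/y$ and so only makes sense for $y \ne 0$, but this is harmless because points in the upper half plane automatically satisfy $y \ne 0$, which is all we need to check non-vanishing. The factor $(-y)^n$ is present precisely to clear denominators and is the reason for the minus sign in the statement.
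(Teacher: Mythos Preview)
Your proof is correct and follows the same route as the paper: both express the reversal via the substitution $y\mapsto -1/y$ and use that this M\"obius map preserves the upper half plane. Your identity $G(\xx,y)=(-y)^n F(\xx,-1/y)$ is in fact slightly more careful with signs than the paper's $y^n g(\xx,-1/y)=\sum f_i(\xx)(-y)^{n-i}$ (which is off by a global factor $(-1)^n$), though of course this is immaterial to the conclusion.
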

\begin{proof}
If $g(\xx,y) = \sum f_i(\xx) y^i$ then
\[ y^n g(\xx,-1/y) = \sum_0^n f_i(\xx) (-y)^{n-i} \]
If $\Im(\sigma)>0$ then $\Im(-\frac{1}{\sigma})>0$, so $g(\xx,-1/y)$
doesn't vanish on the upper half plane.
\end{proof}

\begin{example}
  We can  reverse some polynomials determined by matrices. If
  $X=diag(x_1,\dots,x_n)$ and $A$ is $n$ by $n$ then the reverse with
  respect to $x_1,\dots,x_n$ of $|X + A|$ is  $|-I + XA|$.
\end{example}

\begin{fact}\label{diff}
  $\up{}$ is closed under differentiation.
\begin{quote}  That is, if $f(\xx)\in\up{}$ then
  $\partial_{x_i}f(\xx)\in\up{}\cup\{0\}$. 
\end{quote}
\end{fact}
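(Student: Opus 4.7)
The plan is to reduce from $d$ variables to one variable, and then invoke the Gauss--Lucas theorem (or, equivalently, a short logarithmic-derivative calculation). By symmetry it suffices to handle $i=1$; if $\partial_{x_1} f$ is the zero polynomial there is nothing to prove, so assume it is nonzero.

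First I would fix an arbitrary tuple $\sigma_2,\dots,\sigma_d$ with $\Im(\sigma_j)>0$ for each $j$, and form the single-variable polynomial $g(x_1)=f(x_1,\sigma_2,\dots,\sigma_d)$. Iterated application of Fact~\ref{elem}(4) (with the obvious permutation of variables, so that each $\sigma_j$ may be substituted into its own slot) shows $g\in\up{1}$. Hence every root of $g$ lies in the closed lower half plane $\{z:\Im(z)\le 0\}$. Since partial differentiation commutes with evaluation of the other variables, $(\partial_{x_1}f)(x_1,\sigma_2,\dots,\sigma_d)=g'(x_1)$, so I am reduced to showing that $g'$ has no zero with positive imaginary part.

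For that, factor $g(x_1)=c\prod_j(x_1-\beta_j)$ with $\Im(\beta_j)\le 0$. The logarithmic derivative is
\[
\frac{g'(x_1)}{g(x_1)}=\sum_j\frac{1}{x_1-\beta_j}.
\]
When $\Im(x_1)>0$, each $x_1-\beta_j$ lies in the open upper half plane, so each reciprocal $1/(x_1-\beta_j)$ lies in the open lower half plane. The sum therefore has strictly negative imaginary part, hence is nonzero, which forces $g'(x_1)\ne 0$. Equivalently, this is a direct application of Gauss--Lucas: the roots of $g'$ lie in the convex hull of the roots of $g$, which is contained in the (convex) closed lower half plane.

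There is no real obstacle; the crux is the convexity of the closed lower half plane, which is what lets the one-variable reduction succeed. Letting $\sigma_2,\dots,\sigma_d$ range over the upper half plane then yields $\partial_{x_1}f \in \up{d}$, as desired.
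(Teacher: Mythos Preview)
Your proof is correct and follows essentially the same route as the paper: freeze $\sigma_2,\dots,\sigma_d$ in the upper half plane, reduce to the one-variable polynomial $g(x_1)=f(x_1,\sigma_2,\dots,\sigma_d)$, and apply Gauss--Lucas (your logarithmic-derivative computation is just an explicit proof of the relevant special case).
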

\begin{proof}
We will show that if $f(\xx)\in\up{}$ then
$\partial_{x_1}f(\xx)\in\up{}\cup\{0\}$. If
$\sigma_2,\dots,\sigma_d$ are in the upper half plane then it
    suffices to show $\frac{d}{dx}g(x)\in\up{1}$ where 
\[ g(x) = f(x,\sigma_2,\dots,\sigma_d)\]
By hypothesis $g$ has no roots in the upper half plane, so by the
Gauss-Lucas theorem all roots of $g'$ lie in the convex hull of the
roots of $g$, and so do not lie in the upper half plane.  
\end{proof}

\begin{fact}\label{coef in up}
If $\sum f_i(\xx) y^i \in\up{}$ then all coefficients $f_i(\xx)$
are in $\up{}\cup\{0\}$.
\end{fact}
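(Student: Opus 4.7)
The plan is to isolate each coefficient $f_k(\xx)$ by differentiating the polynomial in $y$ the appropriate number of times and then setting $y=0$. Write $g(\xx,y) = \sum_{i=0}^n f_i(\xx)\,y^i$. Since $y$ is just another variable of $g$, applying Fact~\ref{diff} iteratively gives $\partial_y^k g \in \up{}\cup\{0\}$ for every $k \ge 0$. A direct computation shows
\[
  \partial_y^k g(\xx,y) \;=\; \sum_{i\ge k}\frac{i!}{(i-k)!}\,f_i(\xx)\,y^{i-k},
\]
so the value at $y=0$ is precisely $k!\,f_k(\xx)$.

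Now I would split into two cases. If $\partial_y^k g$ is identically zero, then in particular its constant-in-$y$ term $k!\,f_k$ is zero, so $f_k\equiv 0$ and there is nothing further to check. Otherwise $\partial_y^k g \in \up{}$, and I would invoke Fact~\ref{zero} (applied to the real value $y=0$, with the roles of the variables trivially symmetric) to conclude that $k!\,f_k(\xx)\in\up{}\cup\{0\}$. Rescaling by the nonzero constant $1/k!$ via Fact~\ref{elem}(1) yields $f_k(\xx)\in\up{}\cup\{0\}$, which is what we want.

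The argument really amounts to stringing together three tools already in hand (closure under differentiation, substitution of real values into a single variable, and scalar multiplication), so I do not expect any serious obstacle. The one thing worth flagging is the degenerate case where some iterated derivative collapses to zero: this is exactly why the conclusion carries the $\cup\{0\}$, and it must be handled explicitly rather than absorbed silently, since Fact~\ref{zero} is stated for elements of $\up{}$ and not for the zero polynomial.
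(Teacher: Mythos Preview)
Your proof is correct and follows essentially the same approach as the paper: differentiate $k$ times in $y$ to make $k!f_k(\xx)$ the constant term, then substitute $y=0$. The paper's version is terser, but the underlying argument is identical.
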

\begin{proof}
  If we differentiate $i$ times with respect to $y$ we get
\[
i! f_i(\xx) + y(i+1)!f_{i+1}(\xx) + \cdots + \frac{n!}{(n-i)!}
y^{n-i}f_n(\xx)\in\up{}\cup\{0\}
\]
Now substitute $y=0$.
\end{proof}

\begin{remark}
  Coefficients can certainly be zero. Consider $x^2-1$.
\end{remark}

\begin{definition}
  We say that $f(\xx),g(\xx)$ \emph{interlace}, written $f\ulace g$, if
  and only if $f(\xx) + y g(\xx)\in\up{}$.
\end{definition}

\begin{aside}
See Remark~\ref{rem:interlacing} for the connection with the usual
definition of interlacing in terms of roots.
\end{aside}

\begin{remark}
  Positive constants interlace linear functions. That is, if
  $\ell=\sum a_i x_i+b$ where all $a_i$ are positive and $c>0$ then   $\sum a_i
  x_i+b+cy\in\up{d}$, so $\ell\ulace c$.
\end{remark}

\begin{fact}\label{adj interlace}
  Consecutive coefficients interlace.
\begin{quote} That is, if $\sum_0^n
  f_i(\xx)y^i\in\up{}$ then $f_i(\xx) \ulace f_{i+1}(\xx)$ for 
  $i=0,\dots,n-1$, provided $f_i$ and $f_{i+1}$ are not both zero.
\end{quote}
\end{fact}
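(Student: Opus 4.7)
The plan is to apply a sequence of operations to $p(\xx, y) = \sum_{k=0}^n f_k(\xx)\,y^k$, each preserving membership in $\up{}\cup\{0\}$, so as to isolate $f_i(\xx) + y\,f_{i+1}(\xx)$ up to a positive rescaling of $y$ and a nonzero scalar. The ingredients are differentiation in $y$ (Fact \ref{diff}), reversal in $y$ (Fact \ref{reversal-1}), positive scaling of $y$ (Fact \ref{elem}(2)), and scalar multiplication (Fact \ref{elem}(1)).

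First I would differentiate $p$ with respect to $y$ exactly $i$ times; this kills $f_0,\ldots,f_{i-1}$ and leaves $g(\xx,y) = \sum_{j=0}^{n-i}\frac{(j+i)!}{j!}\,f_{j+i}(\xx)\,y^j$. Next I would reverse $g$ in $y$ via Fact \ref{reversal-1}, after which the coefficients of $y^{n-i}$ and $y^{n-i-1}$ are, up to explicit signs and positive factorials, $f_i$ and $f_{i+1}$. Then I would differentiate $n-i-1$ more times in $y$, killing all but those two top coefficients and producing a polynomial of degree at most $1$ in $y$ of the form $\alpha\,f_{i+1} + \beta\,f_i\,y$ with $\alpha,\beta$ explicit nonzero real constants. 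Finally I would reverse one more time (reversal on a degree-$1$ polynomial simply swaps the two coefficients with one sign change), absorb the overall nonzero sign via Fact \ref{elem}(1), and rescale $y$ by a positive real factor using Fact \ref{elem}(2); a short computation will show that the required rescaling is $\tfrac{n-i}{i+1}>0$ and that the outcome is precisely $f_i(\xx) + y\,f_{i+1}(\xx)$.

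This last polynomial lies in $\up{}\cup\{0\}$. Since $f_i$ and $f_{i+1}$ are not both zero it is nonzero, hence in $\up{}$, which is exactly the definition of $f_i \ulace f_{i+1}$.

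The main obstacle is pure bookkeeping: I must track the two sign factors contributed by the reversals, together with the factorial factors from the differentiation steps, to confirm that the coefficient of $y$ in the normalized linear polynomial is a \emph{positive} rational multiple of $f_{i+1}$, so that Fact \ref{elem}(2) legitimately applies. A secondary check is that no intermediate polynomial vanishes accidentally; any such premature vanishing would force $f_i = f_{i+1} = 0$, contradicting the hypothesis, so this worry is vacuous.
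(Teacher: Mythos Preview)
Your argument is correct and is exactly the paper's proof: differentiate $i$ times in $y$, reverse, differentiate down to degree one in $y$, reverse once more, then clear the nonzero scalar and rescale $y$ by the positive factor $\tfrac{n-i}{i+1}$. Your bookkeeping is in fact cleaner than the paper's sketch (which says ``differentiate $n-i-2$ times'' where $n-i-1$ is meant), and your remark that any intermediate vanishing would force $f_i=f_{i+1}=0$ correctly disposes of the only possible gap.
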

\begin{proof}
Differentiate $i$ times with respect to $y$
\[
i! f_i(\xx) + y(i+1)!f_{i+1}(\xx) + \cdots + \frac{n!}{(n-i)!}
y^{n-i}f_n(\xx)\in\up{}\cup\{0\}
\]
Reverse with respect to $y$
\[
\frac{n!}{(n-i)!} f_n + \cdots + (-y)^{n-i-1}(i+1)! f_{i+1} + (-y)^{n-i}
i! f_i \in\up{}\cup\{0\}
\]
Differentiate $n-i-2$ times with respect to $y$
  \[
(n-i-1)!(i+1)! (-1)^{n-i-1} f_{i+1} + (n-i)!i! (-1)^{n-i} y f_i
\in\up{}\cup\{0\}
\]
Reversing again, factoring out constants, and rescaling $y$ yields the result.
\end{proof}

\begin{fact}\label{diff inter}
If $f\in\up{}$ then $f\ulace \partial_{x_i}f(\xx)$.   
\end{fact}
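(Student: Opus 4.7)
The plan is to reduce to Fact~\ref{adj interlace} (consecutive coefficients interlace) via a shift in the $x_i$ variable. Without loss of generality take $i=1$.

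First, by Fact~\ref{elem}(5) we have $f(x_1+y,x_2,\dots,x_d)\in\up{}$. Taylor expanding this polynomial as a polynomial in $y$ gives
\[
f(x_1+y,x_2,\dots,x_d) \;=\; \sum_{k=0}^{n} \frac{y^k}{k!}\, \partial_{x_1}^{k} f(\xx),
\]
so if we set $g_k(\xx)=\tfrac{1}{k!}\partial_{x_1}^{k}f(\xx)$, then $\sum g_k(\xx)\,y^k\in\up{}$. In particular $g_0=f$ and $g_1=\partial_{x_1}f$; note that neither is zero, since $g_0=f\in\up{}$ and, were $g_1=\partial_{x_1}f$ identically zero, the shift $f(x_1+y,\dots)$ would be independent of $y$, which is fine for applying the next fact in either case.

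Next, applying Fact~\ref{adj interlace} to the expansion $\sum g_k(\xx)y^k\in\up{}$ with $i=0$ yields $g_0\ulace g_1$, i.e.\ $f(\xx)+y\,\partial_{x_1}f(\xx)\in\up{}$, which is by definition $f\ulace \partial_{x_1}f$. The same argument with the $i$-th coordinate playing the role of the first handles general $i$.

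I do not anticipate any real obstacle: the only thing to watch is the degenerate case $\partial_{x_i}f\equiv 0$, in which $f+y\cdot 0=f\in\up{}$ trivially, so the interlacing conclusion still holds under the convention that one of the two polynomials may be zero.
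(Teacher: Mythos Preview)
Your proposal is correct and follows essentially the same route as the paper: shift $x_i$ by $y$, Taylor expand, and invoke Fact~\ref{adj interlace} on the first two coefficients. The paper's version is simply terser and does not spell out the degenerate case $\partial_{x_i}f\equiv 0$, which you handle explicitly.
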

\begin{proof}
  Expanding into a Taylor series
\[ f(x_1,\dots,x_i+y,\dots,x_d) = f(\xx) + \frac{\partial f(\xx)}{\partial
  x_i} \,y + \cdots \]
shows that $f\ulace \frac{\partial f}{\partial x_i}$ since
they are consecutive coefficients of a polynomial in $\up{}$.
\end{proof}

\begin{fact}\label{basic int}\ 
  \begin{enumerate}
  \item $fg\ulace fh$ iff $f\in\up{}$ and $g\ulace h$.
  \item If $f\ulace g$ then $g \ulace -f$.
  \item If $f\ulace g$ and $f\ulace h$ then
    $f\ulace g+h$.
  \item If $f\ulace g$ and $h\ulace g$ then
    $f+h\ulace g$.
  \item If $f\ulace g\ulace h$ then $f-h\ulace g$.
  \end{enumerate}
\end{fact}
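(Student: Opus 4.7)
The plan is to handle (1) and (2) directly from the definitions, derive (3) and (4) by a product-plus-interlacing trick, and obtain (5) formally from (2) and (3).

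First, for (1), I would rewrite $fg + y(fh) = f\cdot(g + yh)$ in the ring of polynomials in $(\xx,y)$ and invoke Fact \ref{multiplicatiion}: a product lies in $\up{}$ if and only if each factor does, which is exactly the claimed equivalence. For (2), I would apply Fact \ref{reversal-1} with $n=1$ to the polynomial $f+yg$ (coefficients $f_0=f$, $f_1=g$); reversal yields $g - yf \in \up{}$, which is precisely $g \ulace -f$.

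The heart of the argument is (3), with (4) being its mirror image. For (3) I would consider the product
\[ (f + yg)(f + yh) \;=\; f^2 + y\,f(g+h) + y^2 gh, \]
which lies in $\up{}$ by Fact \ref{multiplicatiion}. Fact \ref{adj interlace} applied to its first two coefficients gives $f\cdot f \ulace f\cdot(g+h)$, and extracting the common factor via the already-established (1) yields $f \ulace g+h$ (the required $f \in \up{}$ is automatic from Fact \ref{coef in up}). Part (4) is analogous: use the product $(f+yg)(h+yg) = fh + y(f+h)g + y^2 g^2$, apply Fact \ref{adj interlace} to its last two coefficients to obtain $(f+h)g \ulace g\cdot g$, and extract the common factor $g$ via (1).

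For (5), I would chain: (2) converts $f \ulace g$ into $g \ulace -f$; combining with $g \ulace h$ via (3) gives $g \ulace h - f$; (2) again gives $h - f \ulace -g$; and scaling by $-1$ using Fact \ref{elem}(1) gives $f - h \ulace g$. The main obstacle will be bookkeeping the degenerate cases where some polynomial vanishes identically, so that Fact \ref{adj interlace} does not strictly apply or a purported conclusion becomes $0$; these must be verified by hand and read using the $\up{}\cup\{0\}$ convention tacit elsewhere in the paper.
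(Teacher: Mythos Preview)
Your proposal is correct and matches the paper's proof almost exactly: (1) via factoring and Fact~\ref{multiplicatiion}, (2) via reversal, and (3)--(4) via the product trick followed by extracting a common factor. The only cosmetic difference is in (5): the paper observes that $g\ulace h$ gives $-h\ulace g$ (by (2) and a global sign flip) and then applies (4) directly to $f\ulace g$ and $-h\ulace g$, whereas you route through (3) instead; the two are equivalent.
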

\begin{proof}
  If $fg\ulace fh$ then $fg+y fh = f(g+yh)\in\up{}$ so
  $g+yh\in\up{}$. If $f+yg\in\up{}$ then the reverse is $g-yf$, so
  $g\ulace -f$.
 Next, if $f\ulace g$ and $f\ulace h$ then $f+yg$ and
 $f+yh$ are in $\up{}$, so their product
\[f^2 + yf(g+h) + y^2 gh\in\up{}\]
Thus $f^2\ulace f(g+h)$ which implies that $f\ulace
g+h$.

(4) is similar to (3). For (5), apply (4) to $f\ulace g$ and
$-h\ulace g$.
\end{proof}

\begin{aside}
The last property is especially useful. For instance, we have
recurrences that are just like the recurrences for orthogonal
polynomials in one variable.
\end{aside}

\begin{fact}\label{recur}
  Suppose $f_0=1$, $f_1 = \sum_1^d a_ix_i+b$ where the $a_i$ are
  positive. If all constants $a_{nk}$ and $c_k$ are positive and
\[
p_{n+1} = \bigl(\sum_k a_{nk}x_k + b_n\bigr)f_n - c_n f_{n-1} 
\]
then
\[ \cdots \ulace p_n \ulace p_{n-1} \ulace
\cdots p_1 \ulace p_1
\]
\end{fact}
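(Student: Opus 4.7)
The plan is induction on $n\ge 0$, simultaneously establishing $p_n\in\up{}$ and $p_{n+1}\ulace p_n$ (reading the chain as $p_0=f_0=1$, $p_1=f_1$, and so on). The base case is the remark preceding Fact~\ref{adj interlace}: $p_1+y\cdot p_0=\sum_k a_k x_k+b+y$ lies in $\up{}$ because for $\sigma_k,\tau$ in the upper half plane its imaginary part $\sum_k a_k\Im(\sigma_k)+\Im(\tau)$ is strictly positive.

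For the inductive step, fix $n\ge 1$, assume $p_n\in\up{}$ and $p_n\ulace p_{n-1}$, and set $\ell_n:=\sum_k a_{nk}x_k+b_n$. The same positivity argument gives $\ell_n\ulace 1$, and then Fact~\ref{basic int}(1) applied with $f=p_n$, $g=\ell_n$, $h=1$ yields
\[\ell_n p_n\ulace p_n.\]
To pair this with a second interlacing whose right-hand side is also $p_n$, I start from $p_n\ulace p_{n-1}$: Fact~\ref{elem}(1) converts $p_n+y p_{n-1}\in\up{}$ into $-p_n\ulace-p_{n-1}$, Fact~\ref{basic int}(2) then gives $-p_{n-1}\ulace p_n$, and scaling the left-hand side by $c_n>0$ (with a compensating rescaling of $y$) produces
\[-c_n p_{n-1}\ulace p_n.\]
Adding these two displayed interlacings via Fact~\ref{basic int}(4) yields $\ell_n p_n-c_n p_{n-1}\ulace p_n$, which is exactly $p_{n+1}\ulace p_n$. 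A degree count shows $\deg p_{n+1}=n+1$, so $p_{n+1}\ne 0$, and Fact~\ref{coef in up} applied to $p_{n+1}+y p_n\in\up{}$ then gives $p_{n+1}\in\up{}$, completing the induction.

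The main obstacle is purely bookkeeping: converting the inductive hypothesis $p_n\ulace p_{n-1}$, whose right-hand side is $p_{n-1}$, into the form $-c_n p_{n-1}\ulace p_n$ needed so that Fact~\ref{basic int}(4) applies. Once the signs and the positive scalar $c_n$ are tracked correctly, the recurrence $p_{n+1}=\ell_n p_n-c_n p_{n-1}$ is literally the sum of the left-hand sides of the two aligned interlacings, and the result falls out mechanically from the interlacing calculus without any further analytic input.
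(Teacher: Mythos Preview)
Your proof is correct and follows essentially the same inductive strategy as the paper: establish $\ell_n p_n\ulace p_n$ and combine it with the inductive hypothesis $p_n\ulace p_{n-1}$ to obtain $p_{n+1}\ulace p_n$. The only cosmetic difference is that the paper records the chain $\ell_n p_n\ulace p_n\ulace c_n p_{n-1}$ and appeals directly to Fact~\ref{basic int}(5), whereas you unpack that step by first flipping to $-c_n p_{n-1}\ulace p_n$ via Fact~\ref{basic int}(2) and then summing with Fact~\ref{basic int}(4); since (5) is itself proved from (2) and (4), this is the same argument with one layer of packaging removed.
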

\begin{proof}
  We prove by induction that $p_n\ulace p_{n-1}$. This follows
  from the interlacings
\[
\bigl(\sum a_{nk}x_k + b_n\bigr)f_n \ulace f_n \ulace
c_n f_{n-1} 
\]
and Fact~\ref{basic int}.
\end{proof}

\begin{definition}
  If $f(\xx)$ is a polynomial then $f^H(\xx)$ is the sum  of all terms
  with highest total degree. 
\end{definition}

\begin{lemma}Suppose that $f(\xx)\in\up{d}$ is an upper polynomial of
  degree $n$.
  \begin{enumerate}
  \item $f^H(\xx)$ is homogeneous, and an upper polynomial.
  \item $f^H$ is the limit of homogeneous upper polynomials such that
    all monomials of degree $n$ have non-zero coefficient.
  \item All the coefficients of $f^H$ have the same argument.
  \end{enumerate}  
\end{lemma}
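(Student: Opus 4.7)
My plan is to prove the three parts in sequence, with (1) setting up the key tool (scaling + Hurwitz), (2) providing a density statement via perturbation, and (3) reducing to the $2$-variable case.

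For part~(1), I would introduce the dilation $g_s(\xx)=s^{-n}f(sx_1,\ldots,sx_d)$ for $s>0$. Each $g_s$ lies in $\up{d}$ by parts~(1) and (2) of Fact~\ref{elem}, and since $f=f^H+(\text{terms of degree}<n)$ we have $g_s=f^H+O(s^{-1})$, so $g_s\to f^H$ coefficient-wise and hence uniformly on compact subsets of $\mathbb{C}^d$. Homogeneity of $f^H$ is immediate from the definition. For the upper claim I would apply the Hurwitz theorem iteratively, one variable at a time: fixing all but one variable in the upper half plane, the one-variable limit is either identically zero or nowhere zero, and a Hurwitz-style induction on the choice of slicing variable rules out the ``identically zero'' case, since that would eventually force $f^H\equiv 0$, contradicting $f\neq 0$. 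Hence $f^H\in\up{d}$.

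For part~(2), let $L=x_1+\cdots+x_d$, which is upper because the upper half plane is a cone; then $L^n$ is a homogeneous upper polynomial of degree $n$ in which every monomial of degree $n$ appears with a strictly positive multinomial coefficient. I would take the candidate approximants $h_\epsilon=f^H+\epsilon L^n$: these obviously tend to $f^H$ and have every degree-$n$ monomial nonzero. The main obstacle is establishing $h_\epsilon\in\up{d}$ for a sequence of parameters $\epsilon$ accumulating at $0$; equivalently, that the image of the rational function $-f^H(\sigma)/L(\sigma)^n$ on the product of upper half planes has an angular gap at $0$ along which $\epsilon$ can be chosen. I would identify that gap via a direct analysis of how $f^H/L^n$ approaches $0$ when $\sigma$ approaches the boundary of the domain.

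For part~(3), I would first use parts~(1) and (2) to reduce to the case where $f^H$ itself is homogeneous of degree $n$ with every degree-$n$ coefficient nonzero (the common-argument condition passes to limits because $\arg$ is continuous on $\mathbb{C}\setminus\{0\}$). Next I reduce to two variables: by iterating Fact~\ref{zero}, setting $x_k=0$ for $k$ outside a pair $\{i,j\}$ yields a homogeneous upper polynomial of degree $n$ in $(x_i,x_j)$. Such a polynomial factors over $\mathbb{C}$ as $c\prod_{k=1}^n(x_i+s_k x_j)$ with each $s_k\in[0,\infty)$: indeed $(x_i+sx_j)\in\up{2}$ iff $-s$ misses the range of $x_i/x_j$ on a product of two upper half planes, and that range is $\mathbb{C}\setminus(-\infty,0]$, forcing $s\geq 0$ real. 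Expansion in elementary symmetric polynomials then presents every coefficient of the two-variable slice as $c$ times a nonnegative real, so they all share $\arg c$. Coefficients $c_\sdiffi$ whose multi-index has support of size $\geq 3$ are not seen by coordinate-plane slices; to pin these down I would combine the identification $x_i=x_j$ from Fact~\ref{elem}(6) with the positive rescalings of Fact~\ref{elem}(2), producing further two-variable specializations whose ``same-argument'' conclusions give linear relations among groups of coefficients, and then chain those relations to force every coefficient into a common complex line through the origin. The bookkeeping of this chaining is the technically hardest part.
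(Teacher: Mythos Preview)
Your Part~(1) is essentially the paper's argument (scaling plus Hurwitz), and is fine.

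Part~(2), however, has a real gap. You want $h_\epsilon=f^H+\epsilon L^n$ to lie in $\up{d}$ for a sequence $\epsilon\to 0$, but you never establish this, and it is not routine. Concretely, for $d=2$ and $f^H=\imag\,x_1x_2$ one computes that the image of $-f^H/L^2$ on $\text{UHP}^2$ is all of $\complexes$ except the single ray $\{\imag s:s\le 0\}$; so the only admissible $\epsilon$ lie on one ray through~$0$, and locating that ray without already knowing the common argument of the coefficients (i.e.\ without Part~(3), which you intend to deduce from Part~(2)) is exactly the difficulty. The paper avoids this entirely: instead of an additive perturbation it uses the \emph{substitution}
\[
f_\epsilon(\xx)=f\Bigl(\sum_j \epsilon_{1j}x_j,\ldots,\sum_j \epsilon_{dj}x_j\Bigr),\qquad \epsilon_{ij}>0,
\]
which is automatically in $\up{d}$ by Fact~\ref{elem}, and then takes the homogeneous top part via Part~(1). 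Because every degree-$n$ monomial of $f$ contributes a distinct nonzero monomial (in the $\epsilon_{ij}$) to each degree-$n$ coefficient of $f_\epsilon$, those coefficients are nonzero polynomials in the $\epsilon_{ij}$ and hence nonzero for generic $\epsilon$ near the identity.

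Part~(3) is also incomplete. Your two-variable slices correctly show that all coefficients $c_\sdiffi$ with $|\operatorname{supp}\sdiffi|\le 2$ share a common argument~$\theta$. Your proposed extension via identifications and rescalings does yield, after a short argument, that every $e^{-\imag\theta}c_\sdiffi$ is \emph{real}; but it does \emph{not} give the sign. For instance, identifying $x_2=\cdots=x_d$ after rescaling shows only that the polynomial $\sum_{i_1=a} e^{-\imag\theta}c_\sdiffi\,\lambda_2^{i_2}\cdots\lambda_d^{i_d}$ is positive on $(0,\infty)^{d-1}$, and a positive polynomial on the positive orthant need not have positive coefficients. The paper's argument is both shorter and complete: for each multi-index $\sdiffk$ with $|\sdiffk|=n-1$, the polynomial $\partial_{\xx^\sdiffk}f^H$ is \emph{linear} in $\up{d}$, hence all its coefficients share one argument. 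This forces $c_{\sdiffk+e_a}$ and $c_{\sdiffk+e_b}$ to have the same argument whenever both are nonzero, and since (by Part~(2)) every degree-$n$ coefficient is nonzero, the graph on multi-indices with edges $\sdiffi\sim\sdiffi+e_a-e_b$ is connected, giving a single common argument.
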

\begin{proof}
  The first part follows from Hurwitz's theorem and the fact that
  $f^H(\xx)$ equals $\lim_{\epsilon\rightarrow0}
  \epsilon^nf(\xx/\epsilon)$. For the next part, define
\[
f_\epsilon = f\bigl(\sum_{j=1}^d \epsilon_{1j}x_i,\dots\sum_{j=1}^d \epsilon_{dj}x_i\bigr)
\]
where all $\epsilon_{ij}$ are positive. By Fact~\ref{elem}
$f_\epsilon$ is an upper polynomial, and it converges to $f$ as we
let $\epsilon_{ii}\rightarrow 1$ and $\epsilon_{ij}\rightarrow 0$, for
$i\ne j$. For index sets $\sdiffi,\sdiffj,\sdiffk$ of degree $n$ each
non-zero monomial $\xx^\sdiffi$ in $f_\epsilon$ contributes a non-zero
coefficient to $\xx^\sdiffj$, which is different from the contribution
of $\xx^\sdiffk$, for $\sdiffi\ne\sdiffk$. Thus, the coefficient of
$\xx^\sdiffj$ is a non-zero polynomial in the $\epsilon_{ij}$'s, and
hence is non-zero for $\epsilon_{ii}$ close to $1$, and
$\epsilon_{ij}$ close to zero ($i\ne j$). 

By the second part we may assume that all coefficients of monomials of
degree $n$ are non-zero. For any index set with $|\sdiffi|=n-1$ the
polynomial $\partial_{\xx^\sdiffi}f(\xx)$ is linear, so all the
coefficients have the same argument. It follows that if
$\sdiffi,\sdiffj$ satisfy $|\sdiffi-\sdiffj|=1$ then the coefficients
of $\xx^\sdiffi$ and $\xx^\sdiffj$ have the same argument. Since all
the monomials of degree $n$ have non-zero coefficient, it follows that
all these coefficients have the same argument.
\end{proof}

\begin{aside}
We can determine if a polynomial is an upper polynomial,  if two
upper polynomials interlace, or if two real upper polynomials are constant
multiples of one another by reduction to properties of polynomials
of one variable.
\end{aside}

\begin{fact}\label{reduction} \ 
  \begin{enumerate}
  \item $f(\xx)\in\up{d}$ iff $f(\aaa+x \bbb)\in\up{1}$ for all vectors
    $\aaa$, and all vectors $\bbb$ with all positive coordinates.
  \item $f(\xx)\ulace g(x)$ iff $f(\aaa+x\bbb)\ulace
    g(\aaa+x\bbb)$ for all vectors $\aaa$, and all vectors $\bbb$ with
    all positive coordinates.
  \item Suppose that $f,g\in\up{}$ have all real coefficients. If for
    all $\aaa$ and $\bbb>0$ there is a constant $c_{\aaa,\bbb}$ so
    that $f(\aaa+x\bbb) = c_{\aaa,\bbb}g(\aaa+x\bbb)$ then $f$ and $g$
    are constant multiples of one another.
  \end{enumerate}
\end{fact}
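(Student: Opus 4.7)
The plan is to prove (1) directly from the definition, deduce (2) by applying (1) to the auxiliary polynomial $f+yg$ in $d+1$ variables, and handle (3) via a leading-coefficient comparison. For (1), the forward direction is immediate: for $\aaa\in\reals^d$, $\bbb\in\reals_{>0}^d$, and $x$ in the upper half plane, each coordinate $a_j+xb_j$ has imaginary part $b_j\Im(x)>0$, so $\aaa+x\bbb$ lies coordinatewise in the upper half plane and $f(\aaa+x\bbb)\ne 0$; since this polynomial in $x$ takes the nonzero value $f(\aaa+\imag\bbb)$ at $x=\imag$, it belongs to $\up{1}$. For the converse, write any coordinatewise upper-half-plane vector as $\sigma=\aaa+\imag\bbb$ with $\aaa=\Re\sigma\in\reals^d$ and $\bbb=\Im\sigma\in\reals_{>0}^d$; then $f(\sigma)=f(\aaa+\imag\bbb)$ is the value of the element $f(\aaa+x\bbb)\in\up{1}$ at the upper-half-plane point $x=\imag$, hence nonzero.

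For (2), unravel the definitions: $f\ulace g$ means $f(\xx)+y g(\xx)\in\up{d+1}$, and $f(\aaa+x\bbb)\ulace g(\aaa+x\bbb)$ means $f(\aaa+x\bbb)+y g(\aaa+x\bbb)\in\up{2}$ in the pair $(x,y)$. Both implications follow from the same decomposition used in (1): if $\sigma,\tau$ have positive imaginary part then $(\aaa+\sigma\bbb,\tau)$ is a coordinatewise upper-half-plane point of $\complexes^{d+1}$, which handles the forward direction; conversely every coordinatewise upper-half-plane point of $\complexes^{d+1}$ has the form $(\aaa+\imag\bbb,\tau)$ with $\aaa\in\reals^d$, $\bbb\in\reals_{>0}^d$, and $\Im\tau>0$, and the hypothesis applied to this $\aaa,\bbb$ and evaluated at $(\imag,\tau)$ gives the nonvanishing of $f+yg$ there.

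For (3), the key observation is that the leading coefficient of $f(\aaa+x\bbb)$ as a polynomial in $x$ equals the top-degree piece $f^H(\bbb)$, and similarly for $g$. First I would rule out a degree mismatch: if $\deg f>\deg g$, matching the $x^{\deg f}$-coefficient in $f(\aaa+x\bbb)=c_{\aaa,\bbb}g(\aaa+x\bbb)$ forces $f^H(\bbb)=0$ for every $\bbb\in\reals_{>0}^d$, and since $\reals_{>0}^d$ has nonempty interior this gives $f^H\equiv0$, a contradiction. With $\deg f=\deg g$, the leading-coefficient identity yields $c_{\aaa,\bbb}=f^H(\bbb)/g^H(\bbb)$ whenever $g^H(\bbb)\ne 0$, so for any fixed such $\bbb_0$ the scalar $c:=c_{\aaa,\bbb_0}$ is independent of $\aaa$; setting $x=0$ in $f(\aaa+x\bbb_0)=c\,g(\aaa+x\bbb_0)$ then gives $f(\aaa)=c\,g(\aaa)$ for every $\aaa\in\reals^d$, and since the real polynomial $f-cg$ vanishes on all of $\reals^d$ it is identically zero. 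The main obstacle is precisely this final step: the hypothesis a priori furnishes a scalar $c_{\aaa,\bbb}$ depending on both arguments, and one needs the leading-coefficient identity to kill the $\aaa$-dependence together with the $x=0$ substitution to kill the $\bbb$-dependence; parts (1) and (2) are by comparison book-keeping, once one sees that an upper-half-plane vector of $\complexes^d$ splits uniquely as a real vector plus $\imag$ times a strictly positive real vector.
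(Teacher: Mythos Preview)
Your proof is correct and follows essentially the same strategy as the paper: part (1) via the decomposition $\sigma=\aaa+\imag\bbb$ of upper-half-plane points, part (2) by applying (1) to the $(d+1)$-variable polynomial $f+yg$, and part (3) by comparing the leading and constant coefficients of $f(\aaa+x\bbb)$ and $g(\aaa+x\bbb)$. Your execution is in fact slightly cleaner in two places---you take $x=\imag$ directly in (1) rather than introducing the paper's auxiliary $\sigma$, and in (3) you handle the degree mismatch explicitly and justify the existence of $\bbb_0$ with $g^H(\bbb_0)\ne 0$ by the elementary fact that a nonzero polynomial cannot vanish on an open set, whereas the paper appeals to its earlier lemma that all coefficients of $g^H$ share the same argument.
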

\begin{proof}
  We begin with (1). The first direction follows from
  Fact~\ref{elem}. Conversely, suppose that $\sigma_1,\dots,\sigma_d$
  are in the upper half plane. If we choose $\sigma$ to have smaller
  positive imaginary part than any of $\sigma_1,\dots,\sigma_d$ then
  we can find $a_i$ and positive $b_i$ so that $\sigma_i = a_i + b_i
  \sigma$. Thus 
\[ f(\sigma_1,\dots,\sigma_d) = f(\aaa+ \sigma \bbb)\ne0 \]
since $f(\aaa+ y \bbb)\in\up{1}$.

For the second one, one direction is trivial. Conversely, assume that
$f(\aaa+x\bbb)\ulace g(\aaa+x\bbb)$ for all $\aaa$ and $\bbb$ as
before. By definition this means that $f(\aaa+x\bbb)+y
g(\aaa+x\bbb)\in\up{2}$.  By Fact~\ref{elem} we can substitute $a+ b
x$ for $y$, and thus by the first part $f(\xx) + y g(\xx)\in\up{}$.

For (3), first observe that the constant term of $f(\aaa+x\bbb)$ is
$f(\aaa)$.  The leading coefficient of $f(\aaa+x\bbb)$ is
$f^H(\bbb)$. It follows that
\[  f(\aaa) = c_{\aaa,\bbb} g(\aaa)\quad\text{and}\quad
f^H(\bbb) = c_{\aaa,\bbb}g(\bbb)
\]
Since all coefficients of $g^H$ have the same argument we can choose
vectors $\aaa',\bbb'>0$ so that $g(\aaa')\ne0$ and
$g^H(\bbb')\ne0$. It follows that
\[
c_{\aaa,\bbb} = c_{\aaa,\bbb'} = c_{\aaa',\bbb'} 
\]
so that $c_{\aaa,\bbb}$ is constant, which yields the conclusion.
\end{proof}

\begin{aside}
 Interlacing is essentially reflexive.
  \end{aside}

\begin{fact}\label{reflexive}
  Suppose $f,g\in\up{}$ 
  \begin{enumerate}
  \item $f^2+g^2\in\up{}$ iff $f$ and $g$ are constant multiples of
    one another.
  \item If $f\ulace g$ and $g\ulace f$ then  $f$ and $g$ are
    constant multiples of one another.    
  \end{enumerate}
\end{fact}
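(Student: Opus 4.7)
The plan is to derive (2) from (1), then establish (1) by factoring and reducing to one variable. For (2): given $f\ulace g$ and $g\ulace f$, both $f+yg$ and $g+yf$ lie in $\up{}$, so by Fact~\ref{multiplicatiion}
\[(f+yg)(g+yf)=fg+y(f^2+g^2)+y^2fg\in\up{}.\]
Fact~\ref{coef in up} puts the $y$-coefficient $f^2+g^2$ in $\up{}\cup\{0\}$; the vanishing case yields $f=\pm\imag g$ (immediate proportionality), and the nonvanishing case reduces to (1).

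For the forward direction of (1), $f=cg$ with $c\neq\pm\imag$ gives $f^2+g^2=(c^2+1)g^2\in\up{}$ by Fact~\ref{multiplicatiion}. For the reverse, the factorization $f^2+g^2=(f+\imag g)(f-\imag g)\in\up{}$ and Fact~\ref{multiplicatiion} place both $f\pm\imag g$ in $\up{}$. I would then use Fact~\ref{reduction}(1) to restrict to each line $\aaa+x\bbb$ (real $\aaa$, positive $\bbb$) and reduce the question to one variable, where $f, g, f+\imag g, f-\imag g$ all still lie in $\up{1}$. Once proportionality is established on each such line, a patching argument modeled on the proof of Fact~\ref{reduction}(3)---using the preceding Lemma on highest-degree forms to ensure $g^H(\bbb)\neq 0$ for $\bbb>0$---promotes the scalar $c_{\aaa,\bbb}$ to a global constant $c$ with $f=cg$.

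The main obstacle is the one-variable step: forcing the rational function $R=f/g$, analytic on the open upper half plane $H$ and omitting $0,\imag,-\imag$ there, to be constant. When $f, g$ have real coefficients, the identity $f-\imag g=\overline{f+\imag g}$ combined with both factors lying in $\up{1}$ pins the roots of $f+\imag g$ down to $\reals$, so $f$ and $g$ become real scalar multiples of a single real-rooted polynomial, in Hermite--Biehler style. Extending this argument to general complex coefficients, without the conjugation symmetry, is the delicate step; my approach would be an argument-principle calculation on a large semicircle in $H$, tracking winding numbers of $R$ about $0,\imag,-\imag$ and combining with the degree count on fibers of $[f:g]$ to force $R$ to have degree zero as a rational map.
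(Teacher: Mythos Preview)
Your reduction of (2) to (1) via the product $(f+yg)(g+yf)=fg+y(f^2+g^2)+y^2fg$ is exactly the paper's argument. For (1) in the real-coefficient case your route and the paper's also coincide: the paper restricts to a line $\aaa+x\bbb$, asserts that if $p^2+q^2$ has all real roots then $p$ and $q$ are proportional (your factoring $p^2+q^2=(p+\imag q)(p-\imag q)$ together with $p-\imag q=\overline{p+\imag q}$ is precisely what underlies that ``clearly''), and then patches the linewise constants to a global one via Fact~\ref{reduction}(3).

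The genuine gap is your proposed extension to complex coefficients: no argument-principle or winding-number computation can succeed, because statement (1) is \emph{false} over $\up{}$ in that generality. With $d=1$, take $f=x+\imag\in\up{1}$ and $g=1\in\up{1}$; then
\[
f^2+g^2=(x+\imag)^2+1=x^2+2\imag x=x(x+2\imag),
\]
whose roots $0$ and $-2\imag$ both avoid the open upper half plane, so $f^2+g^2\in\up{1}$, yet $f$ and $g$ are not proportional. The paper's own proof is likewise tacitly restricted to real coefficients: the phrase ``has all real roots'' only follows from membership in $\up{1}$ when the coefficients are real, and the patching step is exactly Fact~\ref{reduction}(3), which is stated only for $\rup{}$. (As an aside, part (2) \emph{does} hold over $\up{}$: from $f\ulace g$ and $g\ulace f$ the rational function $f/g$ maps the open upper half plane into both the closed upper and the closed lower half planes, hence into $\reals$, and is therefore constant by the open mapping theorem. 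But neither the paper's route through (1) nor yours reaches this.)
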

\begin{proof}
  Choose $\aaa,\bbb>0$. If $f(\aaa+x\bbb)^2 + g(\aaa+x\bbb)^2$ has all
  real roots then clearly $f(\aaa+x\bbb)$ and $g(\aaa+x\bbb)$ are constant multiples of each
  other. It follows that $f$ and $g$ are constant multiples also.

  In the second part we know $f+yg$ and $g+yf$ are  upper
  polynomials so
\[
(f+yg)(g+yf) = fg + (f^2+g^2)y + fgy^2 \in\up{}
\]
The coefficient of $y$ is a  upper polynomial, so the first part
finishes the proof.
\end{proof}

\begin{aside}
The real and complex parts of an upper polynomial are also upper
polynomials. In one variable this is the well-known 
\end{aside}
\begin{theorem}[Hermite-Biehler]
If $f(x)$ is a polynomial with no roots in the upper half plane and we
write $f(x) = g(x)+\imag h(x)$ where $g$ and $h$ have all real
coefficients then $g\ulace h$.
\end{theorem}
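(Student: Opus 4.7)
My plan is to unpack the interlacing definition: to show $g \ulace h$ it suffices to prove that $g(x) + y\,h(x)$ has no zero $(\sigma,\tau)$ with $\Im\sigma > 0$ and $\Im\tau > 0$. The key device will be the \emph{conjugate polynomial} $\tilde f(x) := g(x) - \imag\,h(x)$, whose coefficients are the complex conjugates of those of $f$; its roots are the complex conjugates of the roots of $f$, so since $f$ has no zeros in the open upper half plane, every zero of $\tilde f$ lies in the closed upper half plane.

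Solving $g = (f+\tilde f)/2$ and $h = (f-\tilde f)/(2\imag)$ and substituting gives the identity
\[
2\bigl(g(x) + y\,h(x)\bigr) \;=\; (1-\imag y)\,f(x) \;+\; (1+\imag y)\,\tilde f(x).
\]
I then argue by contradiction: suppose this vanishes at some $(\sigma,\tau)$ with both coordinates in the open upper half plane. The degenerate case $\tilde f(\sigma)=0$ would force $(1-\imag\tau)f(\sigma)=0$, and since $\Im\tau>0$ makes $1-\imag\tau\ne 0$, this would give $f(\sigma)=0$, contradicting the hypothesis on $f$. Otherwise I may rearrange to
\[
\frac{f(\sigma)}{\tilde f(\sigma)} \;=\; -\,\frac{1+\imag\tau}{1-\imag\tau}.
\]

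The heart of the proof is then a modulus comparison. Factoring $f(x)=c\prod_k(x-\alpha_k)$ with every $\alpha_k$ in the closed lower half plane gives $\tilde f(x)=\bar c\prod_k(x-\bar\alpha_k)$, and the elementary geometric fact $|\sigma-\alpha_k| \ge |\sigma-\bar\alpha_k|$ (valid whenever $\sigma$ is in the open upper half plane and $\alpha_k$ in the closed lower half plane) yields $|f(\sigma)/\tilde f(\sigma)| \ge 1$. On the other hand, writing $\tau=u+\imag v$ with $v>0$, a one-line calculation gives $|1+\imag\tau|^2 - |1-\imag\tau|^2 = -4v < 0$, so the right-hand side has modulus strictly less than $1$. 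The contradiction completes the proof.

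The two points that will require care are, first, disposing of the degenerate case $\tilde f(\sigma)=0$ before I divide, and second, remembering that the modulus inequality on the left-hand side is only weak (with equality when $f$ has all real roots); it is the strict inequality on the right, coming from the fact that the Cayley transform $\tau\mapsto -(1+\imag\tau)/(1-\imag\tau)$ sends the open upper half plane strictly inside the unit disk, that closes the argument.
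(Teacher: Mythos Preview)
Your argument is correct. The identity
\[
2\bigl(g(x)+y\,h(x)\bigr)=(1-\imag y)\,f(x)+(1+\imag y)\,\tilde f(x)
\]
is right, the degenerate case $\tilde f(\sigma)=0$ is handled properly (since $1-\imag\tau$ has real part $1+\Im\tau>0$), and the modulus comparison goes through: for $\Im\sigma>0$ and $\Im\alpha_k\le 0$ one has $|\sigma-\alpha_k|^2-|\sigma-\bar\alpha_k|^2=-4\,\Im\sigma\,\Im\alpha_k\ge 0$, giving $|f(\sigma)/\tilde f(\sigma)|\ge 1$, while $|1+\imag\tau|^2-|1-\imag\tau|^2=-4\,\Im\tau<0$ gives a strict inequality on the other side. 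The contradiction is clean.

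As for the comparison: the paper does \emph{not} prove this theorem. It is quoted as the classical one-variable Hermite--Biehler theorem and then invoked as a black box in the proof of the multivariate Fact that follows (via the reduction $\xx\mapsto\aaa+x\bbb$). So there is no ``paper's own proof'' to compare against; you have supplied a self-contained proof where the paper simply cites the literature. Your approach---passing to the conjugate polynomial $\tilde f$ and comparing $|f/\tilde f|$ on the upper half plane against the Cayley image of $\tau$---is in fact the standard route to Hermite--Biehler, so nothing exotic is happening, but it is a genuine addition relative to what the paper provides.
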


\begin{fact}\label{hb}
  Suppose $f(\xx)$ is a polynomial and we write $f(\xx) = g(\xx) + \imag h(\xx)$
  where $g$ and $h$ have all real coefficients. Then $f\in\up{}$ iff
  $g\ulace h$. 
\end{fact}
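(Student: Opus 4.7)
The plan is to handle the two directions separately, with the reverse direction being essentially immediate and the forward direction reducing to the one-variable Hermite–Biehler theorem.

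For the reverse direction, suppose $g \ulace h$, which by definition means $g(\xx) + y\,h(\xx) \in \up{d+1}$ as a polynomial in $\xx$ and $y$. Since $\Im(\imag) = 1 > 0$, Fact~\ref{elem}(4) lets us substitute $y = \imag$ and conclude that $g(\xx) + \imag\,h(\xx) = f(\xx) \in \up{d}$. This direction takes one line.

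For the forward direction, suppose $f = g + \imag h \in \up{d}$. By Fact~\ref{reduction}(1), for every real vector $\aaa$ and every $\bbb$ with positive entries, the one-variable polynomial $f(\aaa + x\bbb) \in \up{1}$. Because $\aaa$ and $\bbb$ are real and $g,h$ have real coefficients, the decomposition into real and imaginary parts is simply
\[
f(\aaa + x\bbb) \;=\; g(\aaa + x\bbb) \,+\, \imag\, h(\aaa + x\bbb),
\]
with both summands real-coefficient polynomials in $x$. The classical Hermite–Biehler theorem then yields $g(\aaa + x\bbb) \ulace h(\aaa + x\bbb)$ in $\up{1}$. Since this holds for all admissible $\aaa,\bbb$, Fact~\ref{reduction}(2) upgrades this to $g \ulace h$ in $\up{d}$.

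The only subtle point, and really the only thing to watch, is making sure the real-coefficient hypothesis is preserved when we specialize to $\aaa + x\bbb$; this is why we take $\aaa \in \reals^d$ and $\bbb \in \reals_{>0}^d$ rather than anything complex. Otherwise the argument is a clean transfer: Fact~\ref{reduction} does the work of moving between one and many variables in both directions, and the genuinely analytic content is concentrated in the classical one-variable Hermite–Biehler theorem quoted just above.
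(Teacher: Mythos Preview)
Your proof is correct and follows essentially the same route as the paper: substitute $y=\imag$ for the reverse direction, and for the forward direction specialize along real lines $\aaa+x\bbb$, apply the one-variable Hermite--Biehler theorem, and lift back via Fact~\ref{reduction}. Your write-up is slightly more explicit in citing Fact~\ref{elem}(4) and Fact~\ref{reduction}(1),(2), and in noting why the real/imaginary decomposition survives the specialization, but the argument is the same.
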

\begin{proof}
  If $g\ulace h$ then $g + y h\in\up{}$, and if we substitute
  $y=\imag$ we find that $f\in\up{}$. Conversely, choose vectors
  $\aaa$ and positive $\bbb$. We know that $f(\aaa+x \bbb) =
  g(\aaa+x\bbb) + \imag h(\aaa+x\bbb)\in\up{1}$. By the
  Hermite-Biehler theorem for one variable we conclude that 
  $g(\aaa+x\bbb)\ulace h(\aaa+x\bbb)$, and therefore
  $g(\xx)\ulace h(\xx)$.
\end{proof}

\section{Real coefficients}
\label{sec:real-coefficients}

\begin{definition}
  $\rup{d}$ consists of all polynomials in $\up{d}$ with all real
  coefficients. $\rup{}$ is all polynomials in $\up{}$ with all real
  coefficients. We call such polynomials \emph{real upper
    polynomials}.
\end{definition}

\begin{aside}
The real upper polynomials in one variable are those polynomials with
all real coefficients and all real roots. 
\end{aside}

\begin{aside}
  Interlacing is equivalent to closure under linear combinations. In
  one variable this is called Obreschkoff's theorem\cite{rahman}.
\end{aside}

\begin{fact}\label{linearity}
  Suppose $f,g\in\rup{}$. The following are equivalent
  \begin{enumerate}
  \item $\alpha f+\beta g\in\rup{}\cup \{0\}$ for $\alpha,\beta\in\reals$
  \item Either $f\ulace g$ or $g\ulace f$.
  \end{enumerate}
\end{fact}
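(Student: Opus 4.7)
The plan is to handle $(2) \Rightarrow (1)$ by a substitution argument and $(1) \Rightarrow (2)$ by tracking the sign of $\Im(f(\sigma)/g(\sigma))$ over the product $\mathbb{H}^d$ of open upper half planes.

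For $(2) \Rightarrow (1)$: assume $f \ulace g$, so $f(\xx) + y\,g(\xx) \in \up{d+1}$. Given real $\alpha,\beta$ with $\alpha \ne 0$, I would invoke the (variable-symmetric form of the) substitution in Fact~\ref{zero} to replace $y$ by the real value $\beta/\alpha$, yielding $f + (\beta/\alpha)g \in \up{}\cup\{0\}$, and then scale by $\alpha$ using Fact~\ref{elem}(1) to get $\alpha f + \beta g \in \up{}\cup\{0\}$. Since $f,g$ have real coefficients, so does the combination, placing it in $\rup{}\cup\{0\}$. The case $\alpha = 0$ is immediate, and $g \ulace f$ is symmetric.

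For $(1) \Rightarrow (2)$: the key reformulation is that $f \ulace g$ is equivalent to $h(\sigma) := \Im(f(\sigma)/g(\sigma))$ being $\geq 0$ on all of $\mathbb{H}^d$. Indeed, $f,g \in \up{}$ are both nonvanishing on $\mathbb{H}^d$, so $h$ is well defined and continuous there; solving $f(\sigma) + \tau g(\sigma) = 0$ yields $\tau = -f(\sigma)/g(\sigma)$, and this $\tau$ lies in the upper half plane exactly when $h(\sigma) < 0$. Symmetrically, $g \ulace f$ corresponds to $h \leq 0$ everywhere on $\mathbb{H}^d$. So it suffices to show that $h$ does not change sign.

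Here is the crux. Suppose $h$ takes both a strictly positive and a strictly negative value. Since $\mathbb{H}^d$ is path-connected, the intermediate value theorem applied along a path produces $\sigma_* \in \mathbb{H}^d$ with $h(\sigma_*) = 0$, i.e.\ $f(\sigma_*)/g(\sigma_*) = r$ for some $r \in \reals$, so $(f - rg)(\sigma_*) = 0$. But $f - rg$ has real coefficients, and hypothesis (1) forces $f - rg \in \rup{}\cup\{0\}$; since a nonzero element of $\rup{}$ cannot vanish on $\mathbb{H}^d$, we conclude $f = rg$, making $h \equiv 0$ and ruling out the supposed sign change. In the borderline case $f = rg$, $f + yg = (r+y)g \in \up{d+1}$ by Fact~\ref{multiplicatiion} (and symmetrically for $g \ulace f$), so both interlacings hold trivially. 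The main obstacle I anticipate is exactly this sign-invariance step: it hinges on $\mathbb{H}^d$ being connected, on $f$ and $g$ having no zeros there (so $h$ is genuinely continuous on the whole set), and on the rigid constraint that a nonzero real upper polynomial cannot hit zero anywhere in $\mathbb{H}^d$, which is what converts ``$h(\sigma_*) = 0$'' into a genuine contradiction.
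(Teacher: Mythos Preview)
Your proof is correct, but your route for $(1)\Rightarrow(2)$ is genuinely different from the paper's. The paper reduces to one variable: for each line $T=\aaa+x\bbb$ with $\bbb>0$, hypothesis (1) gives $\alpha f(T)+\beta g(T)\in\rup{1}\cup\{0\}$ for all real $\alpha,\beta$, and then the classical Obreschkoff theorem yields either $f(T)\ulace g(T)$ or $g(T)\ulace f(T)$; a continuity argument over the parameter space of lines handles the possibility that the direction of interlacing flips, forcing $f=\lambda g$ in that borderline case, and Fact~\ref{reduction} lifts the interlacing back to $d$ variables. Your argument instead works directly on $\mathbb{H}^d$: you characterize $f\ulace g$ as nonnegativity of $\Im(f/g)$ on $\mathbb{H}^d$, then use path-connectedness and the intermediate value theorem to rule out a sign change, with hypothesis (1) converting any zero of $\Im(f/g)$ into the global identity $f=rg$. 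Your approach is more elementary in that it avoids Obreschkoff and the line-reduction machinery altogether; the paper's approach buys modularity, reusing Fact~\ref{reduction} and the one-variable theory already in place. For $(2)\Rightarrow(1)$ your substitution argument via Fact~\ref{zero} is essentially what the paper means by citing Fact~\ref{basic int}.
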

\begin{proof}
  (2) implies (1) follows from Fact~\ref{basic int}. Conversely,
  choose vectors $\aaa,\bbb>0$ and let $T=\aaa+x\bbb$. Since $\alpha
  f(\xx)+\beta g(\xx)\in\rup{}\cup\{0\}$ we know $\alpha
  f(T)+\beta g(T)\in\rup{1}\cup\{0\}$. By Obreschkoff's theorem we know
  that either $f(T)\ulace g(T)$ or $g(T)\ulace f(T)$.  If only one
  of these possibilities occurs then $f(\xx)$ and $g(\xx)$ interlace.

  If both of these possibilities occur then by continuity we can find
  a $T$ for which we have $f(T)\ulace g(T)$ and $g(T)\ulace
  f(T)$. It follows that $f(T)=\gamma g(T)$ for some real $\lambda$,
  and hence $(f-\lambda g)(T)=0$. Since $f-\lambda g\in\rup{}\cup\{0\}$
  it follows from Fact~\ref{elem} that $f-\lambda g=0$.
\end{proof}

\begin{aside}
We  have a general construction of real upper polynomials.
The following lemma is easily proved.
\end{aside}

\begin{lemma}
    If $A,B$ are symmetric and either one is positive definite then
  $|A+\imag B|$ is not zero.
\end{lemma}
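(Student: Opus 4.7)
The plan is to show the contrapositive: if $\det(A+\imag B)=0$ then neither $A$ nor $B$ can be positive definite. Equivalently, assume without loss of generality that $B$ is positive definite and show that $A+\imag B$ is invertible.

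Suppose for contradiction that there is a nonzero vector $v\in\complexes^n$ with $(A+\imag B)v=0$. I would then multiply on the left by the conjugate transpose $v^*$ to obtain
\[ v^* A v + \imag \, v^* B v = 0. \]
The key observation is that for a real symmetric matrix $M$, the quadratic form $v^* M v$ is real (since $(v^*Mv)^* = v^* M^T v = v^* M v$). So both $v^* A v$ and $v^* B v$ are real numbers, and separating real and imaginary parts forces $v^* B v = 0$. But $B$ is positive definite and $v\ne 0$, so $v^* B v > 0$, a contradiction. The case where $A$ is positive definite is symmetric (or reduce to the above by writing $A+\imag B = \imag(B - \imag A)$ and applying the argument with $A$ in the role of $B$).

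An alternative, slightly more structural approach would be simultaneous diagonalization by congruence: if $B$ is positive definite, there exists an invertible $P$ with $P^T B P = I$ and $P^T A P = D$ diagonal with real entries $d_1,\dots,d_n$. Then $P^T(A+\imag B)P = D+\imag I$, whose determinant is $\prod_j(d_j+\imag)\ne 0$, so $\det(A+\imag B)\ne 0$ as well. I would probably present the quadratic-form argument since it is shorter and avoids invoking the simultaneous diagonalization theorem.

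There is no real obstacle here; the only thing to watch is remembering that symmetry of $A$ and $B$ over $\reals$ is exactly what makes the quadratic forms $v^*Av$ and $v^*Bv$ real-valued on complex vectors, which is the hinge of the argument.
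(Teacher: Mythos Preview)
Your argument is correct. The quadratic-form step is exactly the right hinge: for real symmetric $M$ the scalar $v^{*}Mv$ is real even for complex $v$, so $(A+\imag B)v=0$ forces $v^{*}Bv=0$ (and $v^{*}Av=0$), contradicting positive definiteness of $B$ since for $v=x+\imag y$ with $x,y\in\reals^{n}$ one has $v^{*}Bv=x^{T}Bx+y^{T}By>0$ when $v\ne0$. The reduction for the case where $A$ is positive definite, and the alternative via simultaneous congruence diagonalization, are both fine.

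As for comparison: the paper does not actually prove this lemma. It simply declares that ``the following lemma is easily proved'' and moves on to use it in the proof of Fact~\ref{determinants}. Your write-up supplies precisely the short argument the paper elides, so there is no methodological divergence to discuss---you are filling a deliberate gap rather than taking a different route.
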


\begin{fact}\label{determinants}
If $D_1,\dots,D_d$ are positive definite $n$ by $n$ matrices, $E$ is positive semi-definite, and $S$
is symmetric then
\[ \bigl| S + \sum x_i D_i\bigr| \in\rup{d}\qquad
\bigl| S + \imag E + \sum x_i D_i\bigr| \in\up{d}  \]
\end{fact}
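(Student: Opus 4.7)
The plan is to reduce both claims to the preceding lemma by splitting into real and imaginary parts. Since $|S + \sum x_i D_i|$ is a polynomial in the $x_i$ (with real coefficients, because $S$ and the $D_i$ are real symmetric), to show it lies in $\rup{d}$ it is enough to verify non-vanishing on the upper half plane; the $\up{d}$ statement is the same verification for the complex-shifted version.

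First I would pick $\sigma_1,\dots,\sigma_d$ in the upper half plane and write $\sigma_i = \alpha_i + \imag \beta_i$ with $\alpha_i \in \reals$ and $\beta_i > 0$. Substituting $x_i = \sigma_i$ into the first matrix gives
\[
S + \sum \sigma_i D_i = \Bigl(S + \sum \alpha_i D_i\Bigr) + \imag \Bigl(\sum \beta_i D_i\Bigr).
\]
The real part is symmetric (a sum of symmetric matrices), and the imaginary part $\sum \beta_i D_i$ is a positive combination of positive definite matrices, hence positive definite. The preceding lemma then gives that the determinant is nonzero, establishing the first claim.

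For the second claim, the analogous decomposition yields
\[
S + \imag E + \sum \sigma_i D_i = \Bigl(S + \sum \alpha_i D_i\Bigr) + \imag \Bigl(E + \sum \beta_i D_i\Bigr).
\]
The imaginary part is the sum of a positive semi-definite matrix and a positive definite matrix, which is still positive definite, so the lemma applies again and the determinant does not vanish.

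There is really no obstacle beyond bookkeeping: the only facts used are that positive definiteness is preserved under positive linear combinations and under addition of a positive semi-definite matrix, plus the supplied lemma on $|A+\imag B|$.
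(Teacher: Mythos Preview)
Your proof is correct and follows essentially the same approach as the paper: split $\sigma_i=\alpha_i+\imag\beta_i$, identify the real and imaginary parts of the matrix, observe that $\sum\beta_iD_i$ (respectively $E+\sum\beta_iD_i$) is positive definite, and invoke the preceding lemma. Your extra remark that the first polynomial has real coefficients is a welcome clarification for the $\rup{d}$ conclusion.
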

\begin{proof}
    If $\sigma_1,\dots,\sigma_d$ are in the upper half plane and 
  $\sigma_k = \alpha_k+\imag \beta_k$ then
\begin{gather*} 
\bigl|  S + \sum_1^d x_k\,D_k\bigr|(\sigma_1,\dots,\sigma_d) 
=  \bigl| S + \sum \sigma_k\,D_k\bigr| \\
=  \bigl| \bigl(S + \sum \alpha_k\,D_k\bigr) + \imag\bigl(
\sum \beta_k\,D_k\bigr)\bigr| 
\end{gather*}
and this is non-zero by the lemma since $\beta_k>0$ and so $\sum
\beta_k D_k$ is positive definite.  The second part is similar, and
uses the fact that $E+\sum \beta_k D_k$ is positive definite.
\end{proof}

\begin{remark}
  The Lax conjecture (now solved \cite{lax}) gives a converse for
  $n=2$. It says that if $f\in\rup{2}$ then we can find symmetric $A$
  and positive semi-definite $D_i$ so that $|A + x D_1 + y D_2| =
  f(x,y)$.
\end{remark}

\begin{aside}
Next is a generalization of Fact~\ref{determinants} that involves
sums of determinants \cites{bbs-johnson,johnson}. If $S\subset\{1,\dots,n\}$
and $A$ is an $n$ by $n$ matrix then $A[S]$ is the submatrix of $A$
whose rows and columns are indexed by $S$.
\end{aside} 

\begin{fact}\label{johnson}
  Suppose $L_k = \sum_1^d x_iD_{ik}+A_k$ where the $D_{ik}$ are $n$ by
  $n$ positive definite matrices and the $A_k$ are symmetric. The following
  is a real upper polynomial
\begin{equation}\label{eqn:johnson}
\sum_{S_1\sqcup \cdots \sqcup S_m = \{1,\dots,n\}} \bigl|
L_1[S_1]\bigr| \cdots
\bigl| L_m[S_m]\bigr|
\end{equation}
\end{fact}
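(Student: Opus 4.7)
The plan is to realize the sum in (\ref{eqn:johnson}) as a specific coefficient in a product of determinants, each of which is an upper polynomial by a direct application of the lemma preceding Fact~\ref{determinants}.

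Introduce auxiliary variables $z_1,\dots,z_n$ and set $Z=\mathrm{diag}(z_1,\dots,z_n)$. Multilinear expansion of the determinant along the diagonal gives
\[
|L + Z| \;=\; \sum_{S\subseteq\{1,\dots,n\}} \Bigl(\prod_{i\notin S} z_i\Bigr)\,|L[S]|
\]
for any $n\times n$ matrix $L$. Applying this with $L=L_k$ and multiplying over $k$,
\[
\prod_{k=1}^m |L_k + Z| \;=\; \sum_{S_1,\dots,S_m\subseteq\{1,\dots,n\}} \Bigl(\prod_i z_i^{|\{k\,:\,i\notin S_k\}|}\Bigr)\prod_k |L_k[S_k]|,
\]
and the coefficient of $z_1^{m-1}\cdots z_n^{m-1}$ collects precisely the tuples in which every index $i$ lies in exactly one $S_k$---i.e., the disjoint partitions of $\{1,\dots,n\}$. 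So that coefficient is exactly (\ref{eqn:johnson}).

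Next I would show that each $|L_k+Z|\in\up{d+n}$. Writing $\sigma_r=\alpha_r+\imag\beta_r$ and $\tau_j=\gamma_j+\imag\delta_j$ with $\beta_r,\delta_j>0$, evaluation at $(\sigma,\tau)$ yields a matrix whose imaginary part is
\[
\sum_r\beta_r D_{rk}+\mathrm{diag}(\delta_1,\dots,\delta_n),
\]
which is positive definite as a sum of a positive definite matrix (since the $D_{rk}$ are positive definite and $\beta_r>0$) and a positive definite diagonal matrix. The lemma preceding Fact~\ref{determinants} then gives a nonzero determinant, so $|L_k+Z|\in\up{d+n}$; by Fact~\ref{multiplicatiion}, $\prod_k|L_k+Z|\in\up{d+n}$ as well.

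Finally, iterating Fact~\ref{coef in up} in the variables $z_1,\dots,z_n$ one at a time places every monomial coefficient in $z$ of $\prod_k|L_k+Z|$ into $\up{d}\cup\{0\}$; in particular the coefficient of $z_1^{m-1}\cdots z_n^{m-1}$ does. Since each $L_k+Z$ has real coefficients in $(\xx,z)$, so does this coefficient in $\xx$, placing (\ref{eqn:johnson}) in $\rup{d}\cup\{0\}$. The main thing to get right is the first step---the combinatorial identification of the target monomial coefficient with (\ref{eqn:johnson}); a small technical wrinkle is that Fact~\ref{determinants} does not apply to $L_k+Z$ verbatim, since the $z_j$-coefficient matrices $E_{jj}$ are only positive semidefinite, but the direct imaginary-part computation sidesteps this.
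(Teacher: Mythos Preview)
Your argument is correct and follows essentially the same route as the paper: introduce a diagonal of auxiliary variables, expand each $|L_k+Z|$ over principal submatrices, multiply, and extract the appropriate monomial coefficient. The paper first reverses $|W+L_k|$ in the $w_i$ to $|I-WL_k|$ and then reads off the coefficient of $w_1\cdots w_n$ in the product, whereas you skip the reversal and extract the coefficient of $z_1^{m-1}\cdots z_n^{m-1}$ directly; your explicit imaginary-part computation also patches the small gap you noticed, since Fact~\ref{determinants} as stated requires positive definite coefficient matrices while the $z_j$-coefficients $E_{jj}$ are only semidefinite.
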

\begin{proof}
If $W=diag(w_1,\dots,w_n) $then Fact~\ref{determinants} shows that
$|W+L_k|$ is an upper polynomial. Thus the reverse with respect to
$w_1,\dots,w_n$ is $|I - WL_k|$ and is also an upper polynomial. The
product
\[
\prod_{k=1}^n | I - WL_i| = \prod_{i=1}^n \sum_{S\subset\{1,\dots,n\}}
(-w)^{|S|} \, \bigl| L_k[S]\bigr|
\]
is an upper polynomial, and the coefficient of $w_1\cdots w_n$ is \eqref{eqn:johnson}.
\end{proof}

\begin{aside}
If the coefficients are  real we can reverse all the variables --
without a minus sign.
\end{aside}

\begin{fact}\label{full reverse}
If $f(\xx)\in\rup{d}$ and if $x_i$ has degree $e_i$ then $x_1^{e_1}\cdots x_d^{e_d}
  f(1/x_1,\dots,1/x_d)\in\rup{d}$. 
\end{fact}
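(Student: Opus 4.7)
The plan is to apply the single-variable reversal of Fact~\ref{reversal-1} once to each variable, and then use the reality of $f$ to remove the minus signs that the single-variable reversal introduces.

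For the first step, I would view $f$ as a polynomial in $x_1$ of degree at most $e_1$ with coefficients in $\complexes[x_2,\dots,x_d]$ and apply Fact~\ref{reversal-1} with $y = x_1$. The result is a polynomial in $\up{d}$ obtained from $f$ by replacing each monomial $c\,x_1^{i_1}x_2^{i_2}\cdots x_d^{i_d}$ with $c\,(-x_1)^{e_1-i_1}x_2^{i_2}\cdots x_d^{i_d}$. This operation does not change the degrees in $x_2,\dots,x_d$, so I can iterate it with $x_2,\dots,x_d$ in turn. After all $d$ applications, each monomial $c\,x_1^{i_1}\cdots x_d^{i_d}$ of $f$ has been transformed into $c\,(-x_1)^{e_1-i_1}\cdots(-x_d)^{e_d-i_d}$, and summing these gives exactly $F(-\xx)$, where
\[
F(\xx) := x_1^{e_1}\cdots x_d^{e_d}\,f(1/x_1,\dots,1/x_d)
\]
is the polynomial in question. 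Thus $F(-\xx)\in\up{d}$.

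For the second step, I would use reality to replace $-\xx$ by $\xx$. The claim is that for any polynomial $g$ with real coefficients, $g(\xx)\in\up{d}$ if and only if $g(-\xx)\in\up{d}$. Indeed, if $\sigma_1,\dots,\sigma_d$ lie in the upper half plane, then so do $-\bar\sigma_1,\dots,-\bar\sigma_d$, and reality gives $g(-\sigma_1,\dots,-\sigma_d)=\overline{g(-\bar\sigma_1,\dots,-\bar\sigma_d)}$, so one side vanishes iff the other does. Since $F$ inherits real coefficients from $f$, this promotes the first step's conclusion to $F\in\up{d}$, and hence $F\in\rup{d}$.

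There is no deep obstacle here; the only care required is to track the accumulated minus signs in the first step and to note that Fact~\ref{reversal-1} only needs an upper bound on the degree, so it is legitimate to use $n = e_i$ even if $f$ does not actually attain degree $e_i$ in $x_i$. The conceptually essential ingredient is the real-coefficient symmetry of the second step, which is exactly what distinguishes the full reversal (no minus sign) from the signed single-variable reversal of Fact~\ref{reversal-1}.
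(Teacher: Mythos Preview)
Your argument is correct, but the paper's proof is considerably more direct.  Rather than iterating Fact~\ref{reversal-1} and then removing the accumulated signs, the paper observes in one line that if $\sigma_i$ lies in the upper half plane then so does $1/\overline{\sigma_i}$, and reality of $f$ gives
\[
f(1/\sigma_1,\dots,1/\sigma_d)=\overline{f(1/\overline{\sigma_1},\dots,1/\overline{\sigma_d})}\ne 0.
\]
Both proofs hinge on the same conjugation symmetry of real-coefficient polynomials; the difference is that the paper applies it once with the map $\sigma\mapsto 1/\overline{\sigma}$, while you first use the complex-coefficient reversal $\sigma\mapsto -1/\sigma$ variable by variable and then invoke the symmetry with $\sigma\mapsto -\overline{\sigma}$ to clean up the signs.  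Your route has the modest expository merit of making explicit that Fact~\ref{full reverse} is exactly the signed reversal of Fact~\ref{reversal-1} plus reality, but the paper's version is shorter and avoids the bookkeeping.
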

\begin{proof}
  If $\sigma_i$ is in the upper half plane then
  $1/\overline{\sigma_i}$ is also. Thus 
\[ f(1/\sigma_1,\dots,1/\sigma_d) =
\overline{f(1/\overline{\sigma_1},\dots, 1/\overline{\sigma_d})} \ne 0.
\]
\end{proof}

\begin{fact}\label{multiaff}
  If $a,b,c,d$ are real then $f(x,y)=a+bx + cy + d xy\in\rup{2}$ iff 
$\smalltwodet{b}{a}{d}{c}\ge0$.
\end{fact}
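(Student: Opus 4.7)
The plan is to apply the definition of $\rup{2}$ directly, exploiting that $f$ is affine in $y$. For fixed $\sigma$ with $\Im\sigma>0$, view
\[ f(\sigma,y)=(a+b\sigma)+(c+d\sigma)\,y \]
as a polynomial in the single variable $y$. Then $f\in\rup{2}$ iff, for every such $\sigma$, no zero of $f(\sigma,\cdot)$ lies in the open upper half plane.

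Since $b,c,d$ are real, $c+d\sigma=0$ with $\Im\sigma>0$ forces $d=c=0$; in that degenerate case $f=a+bx$, the determinant $bc-ad$ vanishes, and $f\in\rup{2}$ whenever $f\ne 0$. Otherwise $c+d\sigma\ne 0$, and the unique root in $y$ is
\[ y_0(\sigma)\;=\;-\frac{a+b\sigma}{c+d\sigma}, \]
so the requirement becomes $\Im y_0(\sigma)\le 0$ for every $\sigma$ in the open upper half plane.

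The main calculation uses $c,d\in\reals$ to expand
\[ (a+b\sigma)(c+d\overline\sigma)=ac+bd\,|\sigma|^2+ad\,\overline\sigma+bc\,\sigma, \]
whose imaginary part is $(bc-ad)\Im\sigma$. Dividing by $|c+d\sigma|^2$ gives
\[ \Im y_0(\sigma)\;=\;-\,\frac{(bc-ad)\,\Im\sigma}{|c+d\sigma|^2}, \]
so the condition $\Im y_0(\sigma)\le 0$ for every $\sigma$ with $\Im\sigma>0$ is equivalent to $bc-ad\ge 0$. Both directions of the equivalence fall out of this single identity, and there is no serious obstacle beyond tracking the degenerate case $c=d=0$ and the convention that the zero polynomial is excluded from $\up{}$.
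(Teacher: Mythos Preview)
Your proof is correct and follows essentially the same approach as the paper: solve $f=0$ for $y$ as a M\"obius image of $x$ and check when that image avoids the open upper half plane. The only differences are cosmetic---the paper cites the standard criterion for a real M\"obius map to preserve the upper half plane (positive determinant) and splits cases on whether the determinant vanishes, whereas you compute $\Im y_0(\sigma)$ explicitly and split instead on whether $(c,d)=(0,0)$.
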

\begin{proof}
 If the determinant is zero then there is a $\lambda$ so that
 $b\lambda=d$ and $a\lambda=c$, and so
$f(x,y)= (a+bx)(1+\lambda y)$
is in $\up{2}$. If the determinant is not zero then solving $f=0$
yields
\[
y = -\frac{a+bx}{c+dx}
\]
If $M$ is the \Mobius\ transformation with matrix
$\smalltwodet{b}{a}{d}{c}$, $f\in\up{2}$ and $x$ is in the upper
half plane then $-Mx$ is in the complement. Thus $M$ maps the upper
half plane to itself. This happens exactly when the determinant is
positive. 
\end{proof}

\begin{aside}
This result can be generalized \cite{branden-hpp}.
\end{aside}

\begin{fact}
  Suppose that $f(\xx)$ is a polynomial where every variable has
  degree $1$. Then $f(\xx)$ is in $\rup{d}$ iff
\[
\frac{\partial f}{\partial x_i}\cdot\frac{\partial f}{\partial x_j}-
f\cdot
\frac{\partial f}{\partial x_i\partial x_j}\ge0
\]
for all $\xx\ge0$ and $1\le i,j\le d$.
\end{fact}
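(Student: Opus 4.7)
The $i = j$ case is trivial: multiaffinity forces $\partial^2_{x_i} f = 0$, so the Rayleigh expression reduces to $(\partial_i f)^2 \geq 0$. I focus on $i \neq j$, and abbreviate $R_{ij}(h) := \partial_i h\cdot\partial_j h - h\cdot\partial_{ij} h$.

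For the forward direction, fix $i \neq j$ and choose non-negative reals $a_k$ for $k \notin \{i,j\}$. Iterated application of Fact~\ref{zero} shows that the specialization $g(x_i,x_j) := f(\xx)|_{x_k = a_k}$ lies in $\rup{2} \cup \{0\}$. Multiaffinity writes $g = A + B x_i + C x_j + D x_i x_j$ with real $A, B, C, D$, and Fact~\ref{multiaff} supplies $BC - AD \geq 0$. A direct computation using $\partial_i g = B + Dx_j$, $\partial_j g = C + Dx_i$, and $\partial_{ij} g = D$ shows that $R_{ij}(f)$ is itself independent of $x_i$ and $x_j$ for multiaffine $f$ and equals $BC - AD$ at the chosen point. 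Hence $R_{ij}(f) \geq 0$ on the non-negative orthant.

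For the reverse direction I proceed by induction on $d$, with base case $d = 2$ being Fact~\ref{multiaff}. For $d \geq 3$, decompose $f = g_0(\xx') + x_d\,g_1(\xx')$ with $\xx' = (x_1,\ldots,x_{d-1})$ and $g_0, g_1$ multiaffine. For $i,j < d$ one verifies the identity $R_{ij}(g_0 + t g_1)(\xx') = R_{ij}(f)(\xx', t)$, exploiting that $R_{ij}$ is independent of $x_i, x_j$. The hypothesis then gives $R_{ij}(g_0 + tg_1) \geq 0$ on $\xx' \geq 0$ for every $t \geq 0$, so the inductive hypothesis delivers $g_0 + tg_1 \in \rup{d-1} \cup \{0\}$ for all $t \geq 0$. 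The remaining Rayleigh conditions $R_{id}(f) \geq 0$ for $i < d$ collapse, by multiaffinity in $x_d$, to the Wronskian-type inequality $g_1\,\partial_i g_0 - g_0\,\partial_i g_1 \geq 0$ on $\xx' \geq 0$.

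It remains to derive $g_0 \ulace g_1$, which is equivalent to $f \in \rup{d}$. By Fact~\ref{reduction}(2) this reduces to checking that the univariate restrictions $\tilde g_0(x) = g_0(\aaa + x\bbb)$ and $\tilde g_1(x) = g_1(\aaa + x\bbb)$ interlace along every line with $\bbb > 0$; the ingredients in hand---stability of $\tilde g_0 + t \tilde g_1$ for $t \geq 0$ together with Wronskian positivity eventually in $x$---then feed into Fact~\ref{linearity} (Obreschkoff) to force the desired interlacing. The main obstacle is precisely this final bridge: promoting ``Rayleigh $\geq 0$ on the non-negative orthant plus correctly-signed Wronskians'' all the way to non-vanishing of $f$ on the full upper half polydisc. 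This is the content of the generalization cited as \cite{branden-hpp}, and making it rigorous requires a Hurwitz-style continuity argument carried out carefully as $t$ is moved from the positive real axis into the upper half plane.
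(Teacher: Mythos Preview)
The paper does not actually prove this fact; it is stated with a citation to \cite{branden-hpp} and no argument is given, so there is no proof here to compare your attempt against.

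Evaluating the attempt on its own: the forward direction is correct and standard. For the reverse direction you set up a reasonable induction on $d$, but it does not close, as you yourself concede in the last paragraph. Concretely, Fact~\ref{linearity} requires $\alpha g_0+\beta g_1\in\rup{d-1}\cup\{0\}$ for \emph{all} real $\alpha,\beta$, while the hypothesis ``$\xx\ge0$'' only feeds the induction for $t=x_d\ge0$; the Wronskian sign you extract from $R_{id}$ selects the correct direction of interlacing once interlacing is known, but does not by itself establish it. Handing the crucial step back to \cite{branden-hpp} makes this a sketch rather than a proof. It is worth noting, however, that Br\"and\'en's theorem in \cite{branden-hpp} assumes the Rayleigh inequality for all $\xx\in\reals^d$, not merely $\xx\ge0$. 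Under that stronger hypothesis your induction \emph{would} close: substituting any real $t$ for $x_d$ stays inside the hypothesis, so $g_0+tg_1\in\rup{d-1}\cup\{0\}$ for every real $t$; Fact~\ref{linearity} then gives interlacing in one direction or the other, and Fact~\ref{f and g} combined with $R_{id}\ge0$ everywhere rules out the wrong one (or forces $g_0,g_1$ proportional). The gap you hit is thus largely an artifact of the orthant restriction as printed, which you should treat with suspicion.
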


\begin{aside}
The next two facts are simple consequences of Fact~\ref{multiaff}.
\end{aside}

\begin{fact}\label{f and g}
  If $f\ulace g$ and $f,g\in\rup{d}$ then 
  $\smalltwodet
    {f} {g} {\frac{\partial f}{\partial x_1}}{\frac{\partial g}{\partial x_1} }
    \le0$.
\end{fact}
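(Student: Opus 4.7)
The plan is to build a two-variable multi-affine polynomial to which Fact~\ref{multiaff} can be applied. Starting from $f\ulace g$, we have $h(\xx, y) = f(\xx) + y\,g(\xx) \in \up{d+1}$. Applying Fact~\ref{diff inter} to $h$ in the variable $x_1$ yields $h \ulace \partial_{x_1} h$, so introducing a fresh variable $u$ gives
\[
h + u\,\partial_{x_1} h \;=\; f + u\,\frac{\partial f}{\partial x_1} \;+\; y\Bigl(g + u\,\frac{\partial g}{\partial x_1}\Bigr) \;\in\; \up{d+2}.
\]

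Next I would specialize $\xx$ to an arbitrary real point $\aaa=(a_1,\dots,a_d)$. Iterated use of Fact~\ref{zero} leaves the resulting polynomial in $u,y$ inside $\up{2}\cup\{0\}$, and since the coefficients are real it actually lies in $\rup{2}\cup\{0\}$. Explicitly it is the multi-affine polynomial $A + Bu + Cy + D\,uy$ with $A=f(\aaa)$, $B=\partial_{x_1} f(\aaa)$, $C=g(\aaa)$, $D=\partial_{x_1} g(\aaa)$, so Fact~\ref{multiaff} delivers $\smalltwodet{B}{A}{D}{C} = BC - AD \ge 0$. Rearranging gives $f(\aaa)\,\partial_{x_1} g(\aaa) - g(\aaa)\,\partial_{x_1} f(\aaa) \le 0$, which is exactly the claimed determinant inequality evaluated at $\aaa$; arbitrariness of $\aaa$ finishes the job.

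The main subtlety worth flagging is the ``$\cup\{0\}$'' contingency: if some specialization collapses the polynomial to zero, the inequality is vacuous because both columns of the determinant then vanish at $\aaa$. Otherwise the argument is a direct two-step reduction — one interlacing from the hypothesis $f\ulace g$, one from Fact~\ref{diff inter} — followed by reading off the sign condition from Fact~\ref{multiaff}. I expect the only place to be careful is the sign bookkeeping when matching $A,B,C,D$ to the entries of $\smalltwodet{B}{A}{D}{C}$, since the roles of ``constant term'' and ``leading coefficient'' in each variable have to be tracked to land on $\le 0$ rather than $\ge 0$.
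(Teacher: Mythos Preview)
Your argument is correct and is essentially the paper's own approach: the paper substitutes $x_1\mapsto x_1+z$ in $f+yg$, expands in $z$, and then reverses/differentiates/reverses to isolate the multi-affine piece $f+z\,\partial_{x_1}f+yg+yz\,\partial_{x_1}g$ before invoking Fact~\ref{multiaff}; your use of Fact~\ref{diff inter} packages exactly that Taylor-plus-truncation step, and your explicit specialization of $\xx$ to a real point $\aaa$ simply spells out what the paper leaves implicit when it says ``apply Fact~\ref{multiaff}.'' The sign bookkeeping is right, and your handling of the zero case is adequate since all four coefficients then vanish.
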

\begin{proof}
  Since $f+yg\in\rup{}$ so is the Taylor series
\begin{multline*}
f(x_1+z,x_2,\dots,x_d) + y g(x_1+z,x_2,\dots,x_d) = \\
f(\xx) + z \frac{\partial f}{\partial x_1} + y g(\xx) + y z
\frac{\partial g}{\partial x_1} + \cdots
\end{multline*}
Reversing, differentiating, and reversing  shows that 
\[
f(\xx) + z \frac{\partial f}{\partial x_1} + y g(\xx) + y z
\frac{\partial g}{\partial x_1} \in\rup{}
\]
so we can apply Fact~\ref{multiaff}.
\end{proof}

\begin{aside}
The next fact can be stated for $d$ variables, but it is fundamentally
a property of two variable polynomials.
\end{aside}

\begin{fact}\label{quad-inequality}
  If $\sum a_{i,j}x^iy^j \in\rup{2}$ then 
$  \begin{vmatrix}
    a_{r+1,s} & a_{r+1,s+1} \\ a_{r,s} & a_{r,s+1}
  \end{vmatrix}\le 0
$
for $r,s\ge0$.
\end{fact}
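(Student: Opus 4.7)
The key observation is that the claim involves only the four coefficients at the $2\times 2$ square $(r,s),(r+1,s),(r,s+1),(r+1,s+1)$. The plan is to reduce $f$ to a biaffine polynomial whose $1,x,y,xy$ coefficients are positive multiples of these four values and then invoke Fact~\ref{multiaff}. This realises the aside's remark that the statement is fundamentally two-variable.

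I would proceed in two reductions. First, by Fact~\ref{diff} applied $r+s$ times, the polynomial $g=\partial_x^r\partial_y^s f$ still lies in $\rup{2}\cup\{0\}$; a direct expansion shows its constant, $x$-, $y$- and $xy$-coefficients are $r!s!\,a_{r,s}$, $(r+1)!s!\,a_{r+1,s}$, $r!(s+1)!\,a_{r,s+1}$ and $(r+1)!(s+1)!\,a_{r+1,s+1}$, while all remaining terms involve only $a_{i,j}$'s with $i>r+1$ or $j>s+1$.

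Second, I would strip $g$ down to its biaffine part in $(x,y)$ by mimicking the reversal--differentiation--reversal sequence used in the proof of Fact~\ref{adj interlace}. View $g$ as a polynomial in $x$ over $\reals[y]$, reverse in $x$ by Fact~\ref{reversal-1}, differentiate $(\deg_x g)-1$ times by Fact~\ref{diff}, and reverse in $x$ once more; after multiplying by a nonzero constant and a positive rescaling of $x$ (Fact~\ref{elem}), the output has $x$-degree at most $1$ and still lies in $\rup{}\cup\{0\}$. Repeating the same procedure in $y$ yields a polynomial $\alpha+\beta x+\gamma y+\delta xy\in\rup{2}\cup\{0\}$ in which $(\alpha,\beta,\gamma,\delta)$ equals $(a_{r,s},(r+1)a_{r+1,s},(s+1)a_{r,s+1},(r+1)(s+1)a_{r+1,s+1})$ scaled by one common positive scalar. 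Applying Fact~\ref{multiaff} produces $\beta\gamma-\alpha\delta\ge 0$; after dividing out the common scalar and the positive factor $(r+1)(s+1)$ this rearranges into the sign condition on $\begin{vmatrix}a_{r+1,s}&a_{r+1,s+1}\\a_{r,s}&a_{r,s+1}\end{vmatrix}$ claimed by the fact.

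The main obstacle will be sign bookkeeping through the reversal--differentiation--reversal steps: each reversal multiplies the coefficient of $x^i$ (respectively $y^j$) by $(-1)^i$ (respectively $(-1)^j$), so one must verify that the four coefficients of interest all pick up the same overall positive rescaling, ensuring the direction of the Fact~\ref{multiaff} inequality is preserved. Degenerate cases where $g$ or its biaffine part vanish force all four of the $a_{r+i,s+j}$ to be zero, in which case the determinant is trivially zero.
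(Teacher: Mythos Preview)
Your proposal is correct and follows essentially the same route as the paper's proof: differentiate $r$ times in $x$ and $s$ times in $y$ to bring the four target coefficients down to the bottom corner, then reverse-and-differentiate in each variable to strip off the higher-order terms, leaving a biaffine polynomial to which Fact~\ref{multiaff} applies. The paper phrases the reduction more tersely (``differentiate, reverse, and differentiate'') and skips your final reversal, absorbing the leftover sign and scale directly into the determinant; your extra reversal and explicit bookkeeping are harmless elaborations of the same idea (one small caveat: in a degenerate case such as $\deg_x g=0$ only the two entries $a_{r+1,s},a_{r+1,s+1}$ need vanish, not all four, but the determinant is still zero so your conclusion stands).
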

\begin{proof}
  We differentiate, reverse, and differentiate first with respect to
  $x$ and then to $y$ so that the $x$ and $y$ degrees equal $1$. The
  result is
\[
r!s! a_{r,s} + (r+1)!s! a_{r+1,s}x + 
r!(s+1)! a_{r,s+1}y + (r+1)!(s+1)! a_{r+1,s+1} xy
\]
By Fact~\ref{multiaff} 
\begin{multline*}
0\ge
\begin{vmatrix}
r!s! a_{r,s} &  (r+1)!s! a_{r+1,s} \\
r!(s+1)! a_{r,s+1} & (r+1)!(s+1)! a_{r+1,s+1}  
\end{vmatrix} 
\\
= 
r!s!(r+1)!(s+1)!
\begin{vmatrix}
 a_{r,s} &   a_{r+1,s} \\
 a_{r,s+1} &  a_{r+1,s+1}  
\end{vmatrix} 
\end{multline*}
\end{proof}

\begin{remark}\label{rem:interlacing}
  Interlacing in $\rup{1}$ is closely related to the usual definition
  of interlacing in terms of the location of roots. Suppose
  $f,g\in\rup{1}$, and $f$ has positive leading coefficient. We assume
  the roots of $fg$ are all distinct and that the roots of $f$ and $g$
  alternate, with the largest root belonging to $f$. There are
  various cases depending on the sign of the leading coefficient of $g$, and the
  degrees.\\

  \begin{tabular}{cccl}
    \toprule
    Degree of $f$ & Degree of $g$ & Leading coefficient of $g$ &
    Interlacing \\\midrule
    n & n-1 & + & $f\ulace g$ \\
    n & n-1 & - & $f\ulace -g$ \\
    n & n & + & $f\ulace g$ \\
    n & n & - & $f\ulace -g$ \\
    \bottomrule
  \end{tabular}
\ \\

  In one direction this is proved by explicit constructions that
  $f+yg$ (and its variants) are in $\rup{2}$. In the other direction
  we use  Obreschkoff's theorem to conclude that they interlace in
  the traditional sense. The reflexivity of interlacing implies only
  the answer in the table is possible.
\end{remark}

\section{Analytic closure}
\label{sec:analytic-closure}

\begin{definition}
  $\upbar{d}$ is the uniform closure on compact subsets of $\up{d}$.
\end{definition}

\begin{fact}\label{e-xy}
  $e^{-\xx\cdot\yy} \in\upbar{2d}$ and $e^{-\xx\cdot\xx}\in\upbar{d}$.
\end{fact}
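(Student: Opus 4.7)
The plan is to realize each of these exponentials as the uniform-on-compact-subsets limit of an explicit sequence of upper polynomials, in the spirit of the classical identity $(1-w/n)^n\to e^{-w}$, which holds uniformly on compact subsets of $\complexes$.

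For $e^{-\xx\cdot\yy}$ I would build everything out of the introductory example $xy-1\in\up{2}$. Multiplying by $-1$ (Fact~\ref{elem}(1)) and then rescaling one variable by $1/n$ (Fact~\ref{elem}(2)) gives $1-xy/n\in\up{2}$, and hence $1-x_iy_i/n\in\up{2}$ in each disjoint pair of variables. Raising to the $n$-th power and then taking a product across $i=1,\dots,d$, both of which preserve $\up{}$ by Fact~\ref{multiplicatiion} (the variable blocks being disjoint), produces
\[
P_n(\xx,\yy) \,:=\, \prod_{i=1}^d \Bigl(1 - \tfrac{x_iy_i}{n}\Bigr)^{\!n} \,\in\, \up{2d}.
\]
Each factor converges uniformly on compact subsets of $\complexes^2$ to $e^{-x_iy_i}$, so the finite product $P_n$ converges uniformly on compact subsets of $\complexes^{2d}$ to $\prod_{i=1}^d e^{-x_iy_i} = e^{-\xx\cdot\yy}$, placing that limit in $\upbar{2d}$.

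For $e^{-\xx\cdot\xx}$ the construction is identical with a different one-variable building block: $1-x^2=(1-x)(1+x)\in\up{1}$, since each factor has only a real root. Scaling, powers, and products then give $\prod_{i=1}^d(1-x_i^2/n)^n\in\up{d}$, converging uniformly on compact subsets of $\complexes^d$ to $e^{-\xx\cdot\xx}$. The only step that is not a direct citation of prior facts is the uniform convergence of the finite product; this reduces to the standard one-variable limit $(1-w/n)^n\to e^{-w}$ together with the elementary observation that a finite product of sequences converging uniformly on compacta to bounded continuous limits itself converges uniformly on compacta. I do not expect any substantive obstacle.
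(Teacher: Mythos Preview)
Your argument is correct and, for $e^{-\xx\cdot\yy}$, is exactly the paper's proof: start from $1-x_iy_i/n\in\up{2}$, take $n$-th powers and products, and pass to the limit. For $e^{-\xx\cdot\xx}$ the paper simply sets $\yy=\xx$ in the first conclusion (via Fact~\ref{elem}(6)), while you instead build a direct approximant from $1-x^2\in\up{1}$; this is the same substitution carried out before rather than after the limit, so the two routes are essentially identical.
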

\begin{proof}
We know $1-x_1y_1/n\in\up{2}$, so $\lim_{n\rightarrow\infty}
(1-x_1y_1/n)^n = e^{-x_1y_1}\in\upbar{2}$. Closure under
multiplication implies 
\[ e^{-\xx\cdot\yy} = e^{-x_1y_1}\cdots e^{-x_dy_d}\in\upbar{2d}. \]  
Setting $\xx=\yy$ establishes the second part.  
\end{proof}

\begin{fact}\label{e-dx dy}
  If $f(\xx,\yy)\in\up{2d}$ then $e^{-\partial_\xx\cdot\partial_\yy}f(\xx,\yy)\in\up{2d}$.
\end{fact}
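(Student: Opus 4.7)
The plan is to reduce to a single cross partial and then approximate the exponential by polynomial operators whose stability can be checked directly via closure properties already in hand. Since the operators $e^{-\partial_{x_i}\partial_{y_i}}$ for different $i$ commute pairwise and compose to $e^{-\partial_\xx\cdot\partial_\yy}$, it suffices to prove that if $f\in\up{2d}$ then $e^{-\partial_{x_i}\partial_{y_i}}f\in\up{2d}$ for one fixed $i$. Fix such an $i$ throughout and abbreviate $D=\partial_{x_i}\partial_{y_i}$.

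The key lemma is that for every $c>0$ the operator $I-cD$ preserves $\up{2d}$. To see this, introduce two fresh variables $u,v$. By two applications of Fact~\ref{elem}(5) the polynomial $f(\dots,x_i+u,\dots,y_i+v,\dots)$ lies in $\up{2d+2}$. The polynomial $uv-c$ lies in $\up{2}$: the product of two numbers in the open upper half plane is never a positive real, so it cannot equal $c>0$. Multiplying,
\[
(uv-c)\,f(\dots,x_i+u,\dots,y_i+v,\dots)\in\up{2d+2}.
\]
A short Taylor expansion in $u,v$ shows that the coefficient of the monomial $uv$ in this product equals exactly $f-cDf$. Two successive applications of Fact~\ref{coef in up}, first in $u$ and then in $v$, therefore place $f-cDf$ in $\up{2d}\cup\{0\}$. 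It is nonzero because $Df$ has total degree at least two less than $f$, so $f-cDf$ inherits the nonzero top-degree part of $f$.

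Iterating the key lemma gives $(I-(c/n)D)^n f\in\up{2d}$ for every $n$. Because $D$ is nilpotent on polynomials, these expressions converge uniformly on compact subsets to $e^{-cD}f$. Reducing to a single complex variable via Fact~\ref{reduction}(1) and applying the Hurwitz theorem, the limit is either identically zero on the product of upper half planes or is everywhere nonzero there. The first alternative is excluded because $D$ strictly decreases total degree, so $e^{-cD}f$ has the same leading homogeneous part as $f$ and is thus nonzero. Setting $c=1$ and composing over $i=1,\dots,d$ produces $e^{-\partial_\xx\cdot\partial_\yy}f\in\up{2d}$. The main technical point is the coefficient-extraction step: it recasts the action of the differential operator $I-cD$ as the reading off of a single monomial coefficient from a product of two upper polynomials, at which point the closure properties developed in Section~\ref{sec:complex-coeff} carry the whole argument, with Hurwitz doing the routine work of passing from finite truncations to the exponential.
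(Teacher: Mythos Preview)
Your proof is correct. Both you and the paper reduce to the single-pair statement that $I-cD$ with $D=\partial_{x_i}\partial_{y_i}$ and $c>0$ preserves $\up{}$, then iterate and pass to the limit via Hurwitz; the difference lies in how that key lemma is obtained. The paper argues through interlacing: Fact~\ref{diff inter} applied twice gives $f\ulace\partial_{x_i}f\ulace\tfrac{1}{n}\partial_{y_i}\partial_{x_i}f$, and then Fact~\ref{basic int}(5) yields $f-\tfrac{1}{n}Df\in\up{}$ in one stroke. Your route is multiplicative instead: you form $(uv-c)\,f(\dots,x_i+u,\dots,y_i+v,\dots)\in\up{2d+2}$ and read off $f-cDf$ as the $uv$-coefficient via two applications of Fact~\ref{coef in up}. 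The paper's version is shorter and stays inside the interlacing calculus already built up; yours has the pleasant feature that the stability of $I-cD$ becomes a pure consequence of closure under products and coefficient extraction, with no appeal to interlacing at all, and the template (multiply by a stable polynomial in auxiliary variables, then extract a monomial coefficient) is a flexible device that adapts to other constant-coefficient differential operators.
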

\begin{proof}
  If $f\in\up{}$ then the interlacing of derivatives implies
\[ f \ulace \partial_{x_i} \ulace
\frac{1}{n}\partial_{y_i}\bigl(\partial_{x_i} f\bigr)
\]
and from Fact~\ref{basic int}
\[f - \frac{1}{n} \partial_{y_i}\partial_{x_i} f =
\biggl(1-\frac{\partial_{x_i}\,\partial_{y_i}}{n}\biggr)f\in\up{}
\]
Iterating this shows that the map $f\mapsto
\biggl(1-\frac{\partial_{x_i}\,\partial_{y_i}}{n}\biggr)^nf$ sends
$\up{}$ to itself. Taking limits shows that
$f\mapsto e^{-\partial_{x_i}\,\partial_{y_i}}f$ also maps $\up{}$ to
itself. Composing these transformations for $i=1,\dots,d$ yields
\[
e^{-\partial_\xx\cdot\partial_\yy}f =
e^{-\partial_{x_1}\,\partial_{y_1}} \cdots
e^{-\partial_{x_d}\,\partial_{y_d}}f\in\up{}
\] 
\end{proof}

\begin{fact}\label{f(d)}
  If $f(\xx)\in\up{d}$ and $g(\xx)\in\up{d}$ then
  $f(-\partial_\xx)g(\xx)\in\up{d}\cup\{0\}$. This also holds for $f\in\upbar{d}$.
\end{fact}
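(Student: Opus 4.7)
The plan is to realize $f(-\partial_\xx)g(\xx)$ as the specialization at $\yy=0$ of the operator from Fact~\ref{e-dx dy} applied to a polynomial in which the variables of $f$ and $g$ have been separated.

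Introduce a fresh vector $\yy=(y_1,\dots,y_d)$ disjoint from $\xx$ and consider
\[
F(\xx,\yy) := f(\yy)\,g(\xx).
\]
Since $f(\yy)$ and $g(\xx)$ are upper polynomials in disjoint sets of variables, Fact~\ref{multiplicatiion} gives $F\in\up{2d}$. By Fact~\ref{e-dx dy},
\[
G(\xx,\yy) := e^{-\partial_\xx\cdot\partial_\yy}\,F(\xx,\yy)\in\up{2d}.
\]
Because $\partial_{x_i}$ acts only on the $g$-factor and $\partial_{y_i}$ only on the $f$-factor, expanding the exponential as a product and multiplying out yields
\[
G(\xx,\yy) = \sum_{\sdiffi}\frac{(-1)^{|\sdiffi|}}{\sdiffi!}\,(\partial_\yy^{\sdiffi}f)(\yy)\cdot(\partial_\xx^{\sdiffi}g)(\xx).
\]

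Next I would substitute $y_1,\dots,y_d=0$ one coordinate at a time. Each real specialization preserves membership in $\up{}\cup\{0\}$ by Fact~\ref{zero}, so after all $d$ substitutions $G(\xx,0)\in\up{d}\cup\{0\}$. On the other hand
\[
G(\xx,0) = \sum_{\sdiffi}\frac{(-1)^{|\sdiffi|}}{\sdiffi!}\,(\partial_\yy^{\sdiffi}f)(0)\,\partial_\xx^{\sdiffi}g(\xx)
= \sum_{\sdiffi}\frac{(\partial_\yy^{\sdiffi}f)(0)}{\sdiffi!}\,(-\partial_\xx)^{\sdiffi}g(\xx),
\]
which, by Taylor's formula, is exactly $f(-\partial_\xx)g(\xx)$. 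This settles the polynomial case.

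For $f\in\upbar{d}$, pick $f_n\in\up{d}$ converging to $f$ uniformly on compact subsets. Since $g$ has a fixed finite total degree $N$, the polynomial $f_n(-\partial_\xx)g(\xx)$ depends only on the Taylor coefficients of $f_n$ at the origin of order at most $N$, and those converge to the corresponding Taylor coefficients of $f$ by Cauchy's integral formula. Hence $f_n(-\partial_\xx)g(\xx)\to f(-\partial_\xx)g(\xx)$ uniformly on compact sets, and Hurwitz's theorem places the limit in $\up{d}\cup\{0\}$. The only delicate point is the substitution $\yy=0$, since $0$ lies on the boundary of the upper half plane rather than strictly inside, and this is precisely the situation covered by Fact~\ref{zero}; beyond that the proof is essentially Taylor expansion combined with Fact~\ref{e-dx dy}, so I anticipate no serious obstacle.
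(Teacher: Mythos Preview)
Your proof is correct and follows essentially the same route as the paper: form $f(\yy)g(\xx)\in\up{2d}$, apply Fact~\ref{e-dx dy}, and then evaluate at $\yy=0$ via Fact~\ref{zero} to obtain $f(-\partial_\xx)g(\xx)$. The only cosmetic difference is that the paper verifies the identity $e^{-\partial_\xx\cdot\partial_\yy}g(\xx)f(\yy)\bigl|_{\yy=0}=f(-\partial_\xx)g(\xx)$ by checking monomials, whereas you expand it directly via Taylor's formula; the limit argument for $f\in\upbar{d}$ is likewise the same in spirit.
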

\begin{proof}
      Since $e^{-\partial_\xx\partial_\yy}$ maps $\up{2d}$ to
    itself the lemma follows from the identity
\[
e^{-\partial_\xx\partial_\yy}\,g(\xx)\,f(\yy)\biggl|_{\yy=0} \,=\,
f(-\partial_\xx)g(\xx).
\]
By linearity we only need to check it for monomials since
$e^{-\partial_\xx\partial_\yy}g(\xx)f(\yy)\in\up{}$, and evaluation at
$\yy=0$ is in $\up{}\cup\{0\}$.
\[
e^{-\partial_\xx\partial_\yy} \xx^\sdiffi \yy^\sdiffj\biggl|_{\yy=0} \, =\,
\frac{(-\partial_\xx)^\sdiffj}{\diffj!} \xx^\sdiffi \,\cdot\, (\partial_\yy)^\sdiffj
\yy^\sdiffj\biggl|_{\yy=0}\, =\,
(-\partial_\xx)^\sdiffj \xx^\sdiffi
\]
 All polynomials involved have bounded degree so the passage to the
limit presents no problems.
\end{proof}

\begin{fact}\label{preserver}
  Suppose $T$ is a non-trivial linear transformation defined on polynomials in $d$
  variables.  $T(e^{-\xx\cdot\yy})\in\upbar{2d}$ if and only if $T$ maps
  $\up{d}\cup\{0\}$ to itself.
\end{fact}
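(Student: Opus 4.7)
The plan is to build everything on the reproducing identity
\[
g(\xx) \;=\; g(-\partial_\yy)\, e^{-\xx\cdot\yy}\Big|_{\yy=0},
\]
valid for every polynomial $g$ (check on monomials: $(-\partial_{y_i})^{k_i}e^{-x_iy_i}\big|_{y_i=0}=x_i^{k_i}$). Because $T$ acts only on the $\xx$-variables it commutes with $g(-\partial_\yy)$ and with the substitution $\yy=0$, giving the symbol formula
\[
T(g)(\xx) \;=\; g(-\partial_\yy)\,T(e^{-\xx\cdot\yy})\Big|_{\yy=0}.
\]
Both directions of the equivalence will be read off from this.

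For the forward direction, set $F(\xx,\yy):=T(e^{-\xx\cdot\yy})\in\upbar{2d}$ and fix $g\in\up{d}$. Introduce auxiliary variables $\uu=(u_1,\dots,u_d)$. By Fact~\ref{multiplicatiion} passed to the closure, $F(\xx,\yy)\,g(\uu)\in\upbar{3d}$. The operator $e^{-\partial_\yy\cdot\partial_\uu}$ preserves $\up{3d}$ by iterating Fact~\ref{e-dx dy} over the pairs $(y_i,u_i)$, and hence preserves $\upbar{3d}$ by continuity. The same computation as in the proof of Fact~\ref{f(d)} yields
\[
e^{-\partial_\yy\cdot\partial_\uu}\bigl(F(\xx,\yy)g(\uu)\bigr)\Big|_{\uu=0}\;=\;g(-\partial_\yy)F(\xx,\yy),
\]
so Fact~\ref{zero} puts this in $\upbar{2d}\cup\{0\}$; setting $\yy=0$ and invoking Fact~\ref{zero} again places $T(g)$ in $\upbar{d}\cup\{0\}$. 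By Hurwitz, polynomials in $\upbar{d}$ must already be in $\up{d}\cup\{0\}$, so $T(g)\in\up{d}\cup\{0\}$.

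For the converse, assume $T(\up{d}\cup\{0\})\subseteq\up{d}\cup\{0\}$. Write $e^{-\xx\cdot\yy}=\lim_n P_n$ with $P_n(\xx,\yy)=\prod_{i=1}^d(1-x_iy_i/n)^n\in\up{2d}$ via Fact~\ref{e-xy}. Extend $T$ coefficient-wise to an operator $T_\xx$ on $\complexes[\xx,\yy]$; the plan is to show $T_\xx(P_n)\in\up{2d}\cup\{0\}$, after which the uniform-on-compacts convergence $T_\xx(P_n)\to T(e^{-\xx\cdot\yy})$ (routine from the multinomial expansion, whose summands have bounded $\xx$-degree) places the limit in $\upbar{2d}$. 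For each $\sigma$ with coordinates in the upper half plane, Fact~\ref{elem}(4) gives $P_n(\cdot,\sigma)\in\up{d}$, so $T_\xx(P_n)(\xx,\sigma)=T(P_n(\cdot,\sigma))\in\up{d}\cup\{0\}$, and hence every $\yy$-slice of $T_\xx(P_n)$ is either identically zero or without zeros in the $\xx$-upper-half-plane.

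The main obstacle is the lingering case where $T_\xx(P_n)(\cdot,\sigma_0)\equiv 0$ for some isolated $\sigma_0$ in the $\yy$-upper-half-plane while $T_\xx(P_n)\not\equiv 0$ globally. The bad locus $S=\{\sigma:T_\xx(P_n)(\cdot,\sigma)\equiv 0\}$ is then a proper algebraic subvariety of $\complexes^d$, and the slices over its complement are bona fide upper polynomials in $\xx$. A Hurwitz-type perturbation -- approximating any such $\sigma_0$ by $\sigma_0+\imag t(1,\dots,1)$, which avoids $S$ for all but a discrete set of $t>0$, and passing $t\to 0^+$ via the multivariate Hurwitz theorem implicit in Fact~\ref{zero} -- is what I expect will close the gap, combined with the non-triviality hypothesis to exclude degenerate preservers for which $T_\xx(P_n)$ picks up stray zeros in $U^{2d}$. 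This algebraic-geometric bookkeeping on $S$ is the technical heart of the converse; once handled, the remainder is standard passage to the limit.
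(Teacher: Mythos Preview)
Your forward direction is correct and is the same argument the paper gives: both rest on the reproducing identity $T(g)=g(-\partial_\yy)\,T(e^{-\xx\cdot\yy})\big|_{\yy=0}$ together with the fact (from Fact~\ref{f(d)}) that $g(-\partial_\yy)$ preserves $\upbar{}\cup\{0\}$, followed by evaluation at $\yy=0$. Your write-up unpacks the $e^{-\partial_\yy\cdot\partial_\uu}$ step more explicitly, but it is the same proof.

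For the converse the paper offers no argument at all --- it simply cites \cite{bb2} --- so there is nothing in the paper to compare against. Your attempted route, however, has a genuine gap, and the Hurwitz perturbation you propose does not close it. Suppose the bad set $S=\{\sigma:T_\xx(P_n)(\cdot,\sigma)\equiv 0\}$ meets the $\yy$-upper-half-plane at $\sigma_0$. Then $T_\xx(P_n)(\xx_0,\sigma_0)=0$ for \emph{every} $\xx_0$, so $T_\xx(P_n)\notin\up{2d}$, full stop. Approaching $\sigma_0$ along $\sigma_0+\imag t(1,\dots,1)$ through points outside $S$ and invoking Hurwitz only recovers that the limiting slice lies in $\up{d}\cup\{0\}$, which you already knew; it cannot remove the zeros that $T_\xx(P_n)$ actually has over $\sigma_0$. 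Since $\upbar{2d}$ is by definition the closure of $\up{2d}$, you need an approximating sequence of genuine upper polynomials, and the $T_\xx(P_n)$ need not be such. The non-triviality hypothesis is indeed essential in \cite{bb2} (low-rank preservers require a separate classification), but it enters structurally, not merely to wave away vanishing on a subvariety. The Borcea--Br\"and\'en argument for this direction is substantially more than slicing plus Hurwitz; your sketch, honest as it is about the obstacle, does not yet reach it.
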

\begin{proof}
  Since $f(-\partial_\yy)$ maps $\upbar{}\cup\{0\}$ to itself one direction
  follows from the identity
\[
f(-\partial_\yy)\, T(e^{-\xx\cdot\yy})\bigl|_{y=0} = T(f).
\]
We only need to verify this on  monomials:
\[
(-\partial_{\yy^\sdiffj})T(e^{-\xx\cdot\yy})\bigl|_{y=0} =
(-\partial_{\yy^\sdiffj})\sum_\sdiffi T(\xx^{\sdiffi})
\frac{(-\yy)^{\sdiffj}}{\diffj!} \bigl|_{y=0} =
T(\xx^\sdiffi)
\]
The converse can be found in \cite{bb2}, where $T$ non-trivial means
that $T(\up{})$ has dimension at least $3$.
\end{proof}

\begin{example}
  The expression $T(x^{-\xx\cdot\yy})$ is known as the \emph{generating
  function} of $T$. 
  If $H_i(x)$ is the $i$'th Hermite polynomial then the linear
  transformation
\[ T:x_1^{i_1}\cdots x_d^{i_d} \mapsto H_{i_1}(x_1)\cdots
H_{i_d}(x_d) \]
has generating function
\[
\prod_{i=1}^d e^{-2x_iy_i - y_i^2} = e^{-2\xx\cdot\yy-\yy\cdot\yy} \]
Since this is in $\upbar{2d}$ it follows that $T$ maps $\up{d}$ to itself.
\end{example}

\begin{fact}\label{f(x,D)}
  Suppose that $f(\xx,\yy)$ is a polynomial, and define $T(g) =
  f(\xx,-\partial_\yy)g$. The following are equivalent.
  \begin{enumerate}
  \item $T:\up{d}\longrightarrow \up{d}\cup{0}$.
  \item $f(\xx,\yy)\in\up{2d}$.
  \end{enumerate}
\end{fact}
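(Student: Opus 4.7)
The plan is to identify the generating function of $T$ with $f$ itself and invoke Fact~\ref{preserver}. For a fresh $d$-tuple of auxiliary variables $\uu$, I would first note that $-\partial_{y_i}$ acts on the plane wave $e^{-\uu\cdot\yy}$ as multiplication by $u_i$, so
\[
f(\xx,-\partial_\yy)\, e^{-\uu\cdot\yy} \;=\; f(\xx,\uu)\, e^{-\uu\cdot\yy}.
\]
Reading off the generating function of $T$ (i.e.\ specialising $\yy=0$ in the image) therefore gives $T(e^{-\uu\cdot\yy}) = f(\xx,\uu)$, which, after renaming the auxiliary variables $\uu\to\yy$, is precisely $f(\xx,\yy)$.

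By Fact~\ref{preserver}, condition (1) is equivalent to this generating function lying in $\upbar{2d}$, that is to $f(\xx,\yy)\in\upbar{2d}$. Since $f$ is a polynomial, I would close the argument by noting that for polynomials, membership in $\upbar{2d}$ is equivalent to membership in $\up{2d}\cup\{0\}$: the inclusion $\up{2d}\subseteq\upbar{2d}$ is immediate, and the converse is Hurwitz's theorem applied to the approximating sequence, since a nonzero polynomial which is a compact-uniform limit of upper polynomials cannot itself vanish anywhere on the upper half-plane region. Modulo the trivial case $f=0$, this yields condition (2).

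The main obstacle is the harder direction of Fact~\ref{preserver}, namely ``$T$ preserves $\up{d}\Rightarrow T(e^{-\xx\cdot\yy})\in\upbar{2d}$'', which is merely cited to \cite{bb2} in the excerpt. The implication (2) $\Rightarrow$ (1), however, can be proved directly without that citation: for $g\in\up{d}$ the product $f(\xx,\uu)\,g(\yy)$ lies in $\up{3d}$, the exponential operator $e^{-\partial_\uu\cdot\partial_\yy}$ preserves $\up{3d}$ by Fact~\ref{e-dx dy}, and successive specialization at $\uu=0$ and $\yy=0$ using Fact~\ref{zero} produces $f(\xx,-\partial_\yy)g(\yy)\bigl|_{\yy=0} = T(g)$, so $T(g)\in\up{d}\cup\{0\}$ by the monomial identity that underlies Fact~\ref{f(d)}.
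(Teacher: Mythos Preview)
Your approach is the same as the paper's: compute the generating function of $T$ by applying the operator to the exponential kernel, identify it with $f(\xx,\yy)$, and invoke Fact~\ref{preserver}. Your supplementary direct argument for $(2)\Rightarrow(1)$ via $e^{-\partial_\uu\cdot\partial_\yy}$ and the monomial identity from Fact~\ref{f(d)} is a useful elaboration (since, as you note, the harder direction of Fact~\ref{preserver} is merely cited to \cite{bb2}), but the core route is identical.
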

\begin{proof}
  The generating function of $T$ is
\[ f(\xx,-\partial_\yy)e^{-\xx\cdot\uu -
  \yy\cdot\vv}\bigl|_{\uu=\vv=0} = 
f(\xx,\vv)
\]
so the result follows from Fact~\ref{preserver}.
\end{proof}

\section{Uniqueness results}
\label{sec:uniqueness-results}

\begin{aside}
We end with some interesting uniqueness results that we present
with only sketches of the proof.
\end{aside}

\begin{fact}\label{ps}\cite{bb2}
  Suppose that $T$ is a linear transformation that maps
  $\rup{d}\longrightarrow\rup{d}\cup\{0\}$. If $T(\xx^\sdiffi) =
  \aaa_\sdiffi \xx^\sdiffi$ then $T$ is a composition of
  one-dimensional transformations. That is, there are linear
  transformations $T_i\colon \rup{1}\longrightarrow\rup{1}\cup\{0\}$
  of the form $T_i(x_i^k) = a_{ik}x_i^k$ and $T=T_1\cdots T_d$.
\end{fact}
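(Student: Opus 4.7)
My plan is to show that the eigenvalues $\aaa_\sdiffi$ factor multiplicatively across the $d$ coordinates; once this is established, the decomposition $T=T_1\cdots T_d$ follows immediately by setting $T_k(x_k^i)=b_{k,i}x_k^i$ for suitable one-dimensional eigenvalues $b_{k,i}$, and each $T_k$ preserves $\rup{1}$ because the action of $T$ on any polynomial in $x_k$ alone agrees, up to a global scalar, with $T_k$.

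To derive the multiplicative relation, I would fix distinct indices $k,l\in\{1,\ldots,d\}$, a multi-index $\sdiffj$ with $j_k=j_l=0$, nonnegative integers $r,s$, and real parameters $a,b$, and apply $T$ to the test polynomial
\[
p = \xx^\sdiffj\,x_k^r x_l^s (1+a x_k)(1+b x_l),
\]
which is a product of monomials and real-rooted linear factors and therefore lies in $\rup{d}$ for every choice of $a,b\in\reals$. Since $T$ preserves $\rup{d}$ and $\xx^\sdiffj\in\rup{d}$, Fact~\ref{multiplicatiion} lets us divide $\xx^\sdiffj$ out of $T(p)$ to obtain a two-variable polynomial $q(x_k,x_l)\in\rup{2}\cup\{0\}$. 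Writing $\aaa_{rs}$, $\aaa_{(r+1)s}$, $\aaa_{r(s+1)}$, $\aaa_{(r+1)(s+1)}$ as shorthand for the four eigenvalues obtained by shifting $\sdiffj$ by $(r,s)$, $(r+1,s)$, $(r,s+1)$, $(r+1,s+1)$ in positions $(k,l)$, Fact~\ref{quad-inequality} applied to $q$ at bidegree $(r,s)$ yields
\[
ab\bigl(\aaa_{(r+1)s}\,\aaa_{r(s+1)}-\aaa_{rs}\,\aaa_{(r+1)(s+1)}\bigr)\le 0
\]
for all real $a,b$; letting $ab$ take both signs forces the pairwise multiplicative relation $\aaa_{rs}\,\aaa_{(r+1)(s+1)}=\aaa_{(r+1)s}\,\aaa_{r(s+1)}$.

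Writing $\mathbf{0}=(0,\ldots,0)$ and assuming $\aaa_\mathbf{0}\ne 0$, a routine induction on $|\sdiffi|$ using the above relation one coordinate pair at a time yields the explicit product formula
\[
\aaa_\sdiffi = \aaa_\mathbf{0}^{-(d-1)}\prod_{k=1}^d \aaa_{i_k\,e_k},
\]
where $e_k$ denotes the $k$-th standard basis vector. I would then define $T_1(x_1^i)=\aaa_{i\,e_1}\,x_1^i$ and $T_k(x_k^i)=\aaa_\mathbf{0}^{-1}\aaa_{i\,e_k}\,x_k^i$ for $k\ge 2$, verify that each $T_k$ maps $\rup{1}$ into $\rup{1}\cup\{0\}$ by restricting $T$ to polynomials in $x_k$ alone (so that $T$ agrees with $T_k$ up to a nonzero scalar), and conclude $T=T_1\cdots T_d$ by comparing eigenvalues on monomials.

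The principal obstacle is the degenerate case $\aaa_\mathbf{0}=0$, where the multiplicative relation cascades into vanishing eigenvalues and the inductive product formula above breaks down; here one must analyze the support pattern $\{\sdiffi:\aaa_\sdiffi\ne 0\}$, show using the same pairwise relation that it is itself a Cartesian product of one-dimensional supports, and factor $T$ accordingly so that one or more of the $T_k$ satisfies $T_k(1)=0$ and absorbs the singularity.
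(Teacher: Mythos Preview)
Your proposal is correct and follows essentially the same route as the paper: apply the $2\times 2$ coefficient inequality (Fact~\ref{quad-inequality}) to $T$ evaluated on test polynomials of the form $x_k^r x_l^s(1+ax_k)(1+bx_l)$ to force the multiplicative relation $\aaa_{rs}\aaa_{(r+1)(s+1)}=\aaa_{(r+1)s}\aaa_{r(s+1)}$, then induct to factor the eigenvalues across coordinates. The paper uses the two specific sign choices $(1+x)(1\pm y)$ where you vary $a,b$ continuously; the effect is identical. Your write-up is in fact more careful than the paper's sketch, which silently assumes $a_{00}=1$ and does not mention the degenerate case $\aaa_{\mathbf{0}}=0$ at all.
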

\begin{proof}
 It suffices to assume $d=2$, so write $T(x^iy^j)=a_{i,j}x^iy^j$. The
 inequality of Fact~\ref{quad-inequality} applied to
 $T(x^iy^j(1+x)(1+y))$ and $T(x^iy^j(1+x)(1-y))$ yields that
 $a_{ij}a_{i+1,j+1}=a_{i,j+1}a_{i+1,j}$. We use this identity to prove
 inductively that $a_{ij}=a_{i0}a_{0j}$. $T$ is now the product of the
 two transformations $x^i\mapsto a_{i,j0}x^i$ and $y^j\mapsto a_{0,j}y^j$.
\end{proof}

\begin{fact}\label{u-deriv}\cite{fisk}
  If $T$ is a linear transformation on $\rup{d}$ such that
  \begin{enumerate}
  \item $T$ reduces degree.
  \item $f \ulace Tf$
  \end{enumerate}
then there are $a_1,\dots,a_d$ of the same sign so that
\[ Tf = a_1\frac{\partial f}{\partial x_1} + \cdots + a_d\frac{\partial f}{\partial x_d}\]
\end{fact}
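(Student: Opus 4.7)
The plan is to identify $T' := \sum_i a_i\partial_{x_i}$ (with $a_i := T(x_i)$) and prove $T = T'$ by induction on total degree.

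\textbf{Setup.} Because $T$ strictly lowers degree, $T(1) = 0$, and $a_i := T(x_i) \in \reals$ is a constant. The hypothesis $x_i \ulace a_i$ means $x_i + a_i y \in \rup{2}$; checking that $x_i = -a_i y$ avoids the upper half plane forces $a_i \ge 0$, so the $a_i$ share the nonnegative sign. Fact~\ref{diff inter} together with repeated application of Fact~\ref{basic int}(3) using the nonnegative weights $a_i$ yields $f \ulace T' f$ for every $f \in \rup{d}$.

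\textbf{Induction.} We prove $T = T'$ on polynomials of degree $\le n$ by induction on $n$; the cases $n \le 1$ are built into the setup. Assume $T = T'$ on polynomials of degree less than $n \ge 2$ and fix a multi-index $\sdiffi = (n_1, \ldots, n_d)$ with $|\sdiffi| = n$. By linearity it suffices to show $T(\xx^\sdiffi) = T'(\xx^\sdiffi)$. For real parameters $\alpha_1, \ldots, \alpha_d$, the test polynomial $f_\alpha := \prod_i (x_i + \alpha_i)^{n_i} \in \rup{d}$ has $\xx^\sdiffi$ as its top monomial, so by the inductive hypothesis
\[
h(\xx) := T(\xx^\sdiffi) - T'(\xx^\sdiffi) = T f_\alpha - T' f_\alpha
\]
is a polynomial of degree $\le n-1$ independent of the parameters $\alpha_i$.

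\textbf{Rigidity.} Given $\uu = (u_1, \ldots, u_d) \in \reals^d$, $\vv = (v_1, \ldots, v_d) \in \reals^d_{>0}$, and $c \in \reals$, set $\alpha_i := -u_i - c v_i$. Then $f_\alpha(\uu + x\vv) = B(x - c)^n$ with $B := \prod v_i^{n_i} > 0$, and Fact~\ref{reduction}(2) yields $f_\alpha(\uu + x\vv) \ulace T f_\alpha(\uu + x\vv)$ in $\rup{1}$. A short one-variable check --- if $n \ge 2$ and $q \in \rup{1}$ satisfies $(x-c)^n \ulace q$, then either $q = 0$ or $q = C(x-c)^{n-1}$ for some $C\in\reals$, since writing $q = C(x-c)^k$ with $n-k \ge 2$ makes $(x-c)^n + yq = (x-c)^k\bigl[(x-c)^{n-k}+yC\bigr]$ acquire a zero with $x, y$ in the upper half plane (take $x-c$ to be a suitable $(n-k)$-th root of $-yC$) --- forces $T f_\alpha(\uu + x\vv) = C(x - c)^{n-1}$ for some real $C$. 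A direct expansion gives $T' f_\alpha(\uu + x\vv) = D (x - c)^{n-1}$ with $D := B \sum_i a_i n_i / v_i$. Therefore
\[ h(\uu + x\vv) = (C - D)(x - c)^{n-1}. \]
The left-hand side does not depend on $c$, but the non-leading coefficients of $(C-D)(x-c)^{n-1}$ are nonzero multiples of positive powers of $c$ unless $C = D$. Hence $C = D$, so $h(\uu + x\vv) \equiv 0$. Letting $\uu, \vv$ range over $\reals^d \times \reals^d_{>0}$ yields $h \equiv 0$, closing the induction.

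\textbf{Main obstacle.} The technical crux is the one-variable rigidity lemma used to conclude $Tf_\alpha(\uu+x\vv) = C(x-c)^{n-1}$; once it is in hand, the impossibility of $h$ depending on the free parameter $c$ finishes the argument. Extracting this rigidity from the hypothesis $f \ulace Tf$ --- rather than the more familiar Gauss-Lucas setting for $Tf = f'$ --- is where the real work lies.
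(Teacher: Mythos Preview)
Your proof is correct and, unlike the paper's sketch, actually treats general $d$. Both arguments rest on the same one-variable rigidity fact---that the only polynomials of degree $<n$ interlaced by $(x-c)^n$ are the scalar multiples of $(x-c)^{n-1}$---but deploy it differently. The paper (for $d=1$ only) tests $T$ on $x^n$ to get $T(x^n)=a_nx^{n-1}$, then on $(x+1)^n$, and reads off $a_n=na_1$ from a direct coefficient comparison. You instead run an induction on total degree, test on the family $f_\alpha=\prod_i(x_i+\alpha_i)^{n_i}$, restrict to a line $\uu+x\vv$ via Fact~\ref{reduction}, and exploit the free parameter $c$ (hidden in the $\alpha_i$) to force $h(\uu+x\vv)\equiv 0$. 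Your route is longer but buys the multivariate statement cleanly, whereas the paper leaves the passage from $d=1$ to general $d$ to the reader.

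One small gap: your ``short one-variable check'' of the rigidity lemma only rules out $q=C(x-c)^k$ with $k\le n-2$; you never argue that a $q$ interlaced by $(x-c)^n$ must have all of its roots at $c$ in the first place, so a $q$ with a root away from $c$ is not excluded by what you wrote. The paper simply asserts the lemma without proof, so you are no worse off, but your ``since'' clause as written does not establish the full claim.
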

\begin{proof}
  We only discuss $d=1$. We use the fact that $\alpha(x+b)^{n-1}$ is
  the only polynomial of smaller degree interlacing
  $(x+b)^n$. Choosing $f=x^n$ shows that $T(x^n)=
  a_nx^{n-1}$. Choosing $f=(x+1)^n$ shows
\[
T(x+1)^n = \sum a_i\binom{n}{i}x^{i-1} = \alpha \sum \binom{n-1}{i}x^i
\]
and equating coefficients yields $T(x^n) = a_1 nx^{n-1}$.
\end{proof}

\begin{fact}\label{affine} \cite{fisk}
  If $T$ is a linear transformation such both $T$ and $T^{-1}$ map
  $\rup{1}$ to itself then $(Tf)(x) = a\,f(bx+c)$ where $ab\ne0$.
\end{fact}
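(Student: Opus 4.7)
The plan is to show that $T$ is of the form $f\mapsto af(bx+c)$ with $ab\neq0$. The strategy has three parts: (i) show $T$ preserves degree; (ii) normalize so that $T(1)=1$ and $T(x)=x$; (iii) prove the normalized map is the identity by induction. For step (i), set $q_n=T(x^n)$. Since $(x+t)^n\in\rup{1}$ for every real $t$, so is $T((x+t)^n)=\sum_{i=0}^n\binom{n}{i}t^iq_{n-i}$. For $n=1$, Fact~\ref{linearity} and Remark~\ref{rem:interlacing} give $|\deg q_0-\deg q_1|\le 1$; combined with the symmetric statement applied to $T^{-1}$ and the invertibility one deduces $\deg q_0=0$ and $\deg q_1=1$. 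Induction on $n$, using the same shift identity and the asymptotic analysis described in step (iii) below, extends this to $\deg q_n=n$.

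For step (ii), write $T(1)=\alpha$ and $T(x)=\beta x+\gamma$ with $\alpha,\beta\neq 0$, and define
\[
S(f)(x) \;:=\; \alpha^{-1}\,T(f)\!\left(\tfrac{\alpha x-\gamma}{\beta}\right).
\]
Then $S(1)=1$, $S(x)=x$, and $S$ together with $S^{-1}$ still preserve $\rup{1}$. Showing $S$ is the identity unwinds (substitute $y=(\alpha x-\gamma)/\beta$) to $T(f)(y)=\alpha f(\beta y/\alpha+\gamma/\alpha)$, which is $af(by+c)$ with $ab=\beta\neq0$. For step (iii), induct on $n$. Assuming $S(x^k)=x^k$ for $k<n$, degree preservation gives $S(x^n)=x^n+r(x)$ with $\deg r<n$, so $S((x+t)^n)=(x+t)^n+r(x)$ is real-rooted in $x$ for all real $t$. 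Setting $y=x+t$, the polynomial $y^n+r(y-t)$ is real-rooted in $y$ for every $t$. For $n\geq 3$, a Newton-polygon analysis as $|t|\to\infty$ shows that its $n$ roots cluster near the $n$-th roots of a fixed nonzero real constant (determined by the leading coefficient of $r$); only one or two of these are real, forcing $r\equiv 0$. For $n=2$, write $S(x^2)=x^2+ax+b$: the discriminant of $S((x+t)^2)$ equals $4at+(a^2-4b)$, non-negative for all $t$ only when $a=0$ and $b\le 0$; then $S^{-1}(x^2)=x^2-b$ must lie in $\rup{1}$, forcing $b\ge 0$, so $b=0$.

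The main obstacle is step~(i), establishing degree preservation: it cannot be obtained purely formally and requires combining Fact~\ref{linearity} with the asymptotic analysis of (iii) and the hypothesis on $T^{-1}$. The $n=2$ case of (iii) is notably the only place where the full strength of two-sided invertibility is strictly necessary; for $n\geq 3$, preservation under $T$ alone already suffices to force $r\equiv 0$.
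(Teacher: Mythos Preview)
Your approach is genuinely different from the paper's. After the same normalization $T(1)=1$, $T(x)=x$, the paper does not analyze $T((x+t)^n)$ directly. Instead it introduces the auxiliary operator $Uf:=T^{-1}\bigl((Tf)'\bigr)$: since $Tf\ulace (Tf)'$ and $T^{-1}$ preserves interlacing, one gets $f\ulace Uf$, so $U$ meets the hypotheses of Fact~\ref{u-deriv} and must be a scalar multiple of $\partial_x$. Hence $T(f')$ is proportional to $(Tf)'$, and a short induction gives $T(x^n)=x^n$. That route leans on the uniqueness-of-the-derivative fact and uses the hypothesis on $T^{-1}$ at every degree; your Newton-polygon argument is more self-contained and elementary in spirit, but also more computational.

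There is, however, a real gap in your step~(iii): degree preservation yields only $S(x^n)=c_n x^n+r(x)$ with $c_n\ne 0$, not $c_n=1$; nothing in your normalization or inductive hypothesis pins down this leading coefficient. With $c_n\ne 1$ the polynomial $S((x+t)^n)$, written in $y=x+t$, becomes $y^n+(c_n-1)(y-t)^n+r(y-t)$, and its large-$|t|$ behavior is dominated by $(c_n-1)(-t)^n$ rather than by $r$, so both the Newton-polygon step for $n\ge 3$ and your $n=2$ discriminant computation are run under an unproved assumption. The gap is closable---rescaling $x=ty$ and letting $t\to\infty$ gives the Hurwitz limit $(c_n-1)y^n+(y+1)^n$, which for $n\ge 3$ is real-rooted only when $c_n=1$, and for $n=2$ applying the same bound to $S^{-1}$ forces $c_2=1$---but this argument is absent from your sketch. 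Step~(i) is likewise only asserted: the claimed reduction of degree preservation to the asymptotics of~(iii) is not actually carried out.
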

\begin{proof}
  We may assume that $T(1)=1$ and $T(x)=x$.
  Since $T(x^n) \ulace T(x^n)'$ we have that $x^n \ulace
  T^{-1}(T(x^n)')$, so the linear transformation $S(f)=T^{-1}(T(f)')$
  satisfies the hypothesis of the previous fact. Consequently
  $T^{-1}(T(x^n)') = \alpha_n (x^n)'$ and hence $T(f')= (Tf)'$. We use
  this inductively to show that $T(x^n)=x^n$.
\end{proof}

\begin{fact}\label{even}\cite{fisk}
  If $f(x,y)\in\rup{2}$ has the property that all exponents of $x$ and
  $y$ are even then $f(x,y) = g(x)h(y)$ with $g,h\in\rup{1}$.
\end{fact}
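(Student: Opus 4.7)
The plan is to view $f(x,y)$ as a polynomial in $x$ with coefficients in $\complexes[y]$, use the evenness of the $x$-exponents together with $f\in\rup{2}$ to show that the roots in $x$ are real whenever $y$ lies in the upper half plane, and then invoke the holomorphic rigidity principle to deduce that each coefficient ratio $a_k(y)/a_d(y)$ is constant.

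First I fix $y_0$ in the upper half plane and let $\alpha$ be any root of $f(x,y_0)$ in $x$. Since $f\in\rup{2}$, $\alpha$ does not lie in the upper half plane, so $\Im(\alpha)\le 0$. The evenness of every $x$-exponent gives $f(-x,y_0)=f(x,y_0)$, so $-\alpha$ is also a root, which forces $\Im(\alpha)\ge 0$; together these give $\alpha\in\reals$. Hence every root of $f(x,y_0)$ is real.

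Now write $f(x,y)=\sum_{k=0}^{d}a_k(y)x^k$ with $d=\deg_x f$ and $a_d\not\equiv 0$. For each $y_0$ in the upper half plane with $a_d(y_0)\neq 0$, the roots being real means that every elementary symmetric function of the roots is real, so the ratio $r_k(y_0):=a_k(y_0)/a_d(y_0)$ is real. The set $\Omega$ obtained from the upper half plane by removing the discrete zero set of $a_d$ is open and connected, and each $r_k$ is holomorphic and real-valued on $\Omega$. A holomorphic function on a connected open subset of $\complexes$ that is real-valued on a non-empty open subset must be constant: its imaginary part is harmonic, vanishes on an open set hence is identically zero, and a real-valued holomorphic function is constant by the open mapping theorem. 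Therefore each $r_k\equiv c_k$ for a real constant $c_k$, and $a_k(y)=c_k\,a_d(y)$ for every $k$.

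Setting $g(x):=\sum_k c_k x^k$ and $h(y):=a_d(y)$ gives the factorization $f(x,y)=g(x)\,h(y)$, and Fact~\ref{multiplicatiion} then yields $g\in\rup{1}$ and $h\in\rup{1}$. The main obstacle is the clean application of the holomorphic rigidity principle; the remaining steps are routine manipulation of coefficients. Note that the hypothesis on the $y$-exponents is not strictly needed for the argument, though it does guarantee that the factors $g$ and $h$ inherit only even exponents.
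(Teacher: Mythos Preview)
Your proof is correct and takes a genuinely different route from the paper's. The paper proves a stronger intermediate statement: if the $y$-expansion $f=\sum_i f_i(x)y^i\in\rup{2}$ has a single internal gap, i.e.\ $f_{i-1}\ne 0$, $f_i=0$, $f_{i+1}\ne 0$, then $f$ already factors as $g(x)h(y)$; the argument isolates (by differentiating, reversing, and differentiating again in $y$) the two-term piece $f_{i-1}(x)+f_{i+1}(x)y^2\in\rup{2}$ and concludes that $f_{i+1}$ is a scalar multiple of $f_{i-1}$, then iterates. You instead exploit the evenness in $x$ directly: for $y_0$ in the upper half plane the $x$-roots come in $\pm$ pairs and lie in the closed lower half plane, hence are real, so each coefficient ratio $a_k(y)/a_d(y)$ is a real-valued holomorphic function on the upper half plane and therefore constant by the open mapping theorem. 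Your argument is self-contained and avoids the reversal machinery entirely; the paper's approach buys the stronger ``single gap'' result, which needs no parity hypothesis on the $x$-exponents. As you correctly observe, your proof uses only the evenness of the $x$-exponents.
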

\begin{proof}
  This is a special case of the following result:
  \begin{quote}
    If $f(x,y) = \cdots + f_{i-1}(x)y^{i-1} + 0\cdot y^i +
    f_{i+1}(x)y^{i+1}+\cdots\in\rup{2}$ where $f_{i-1}f_{i+1}\ne0$ then $f(x,y)
    = g(x)h(y)$ where $f,g\in\rup{1}$.
  \end{quote}
  To prove this we differentiate, reverse, and differentiate so that
  we only have $f_{i-1}(x) + f_{i+1}(x)y^2$. It follows that $f_{i+1}$
  is a constant multiple of $f_{i-1}$.  Continuing this argument
  concludes the proof.
\end{proof}

\section{Questions}
\label{sec:questions}

\begin{aside}
  Here are a few unsolved questions.
\end{aside}
\begin{question}
  Suppose that $f(\xx),g(\xx)\in\rup{d}$ have the property that
  $f+\alpha g\in\rup{d}$ for all positive $\alpha$. Show that there is
  an $h\in\rup{d}$ so that $h\ulace f$ and $h\ulace g$. This is easy
  if $d=1$.
\end{question}
\begin{question}
  If $T$ is a bijection on $\rup{d}$ then are there  constants
  $\aaa>0$, $\bbb$,$\alpha$ and a permutation $\sigma$ of $\{1,\dots,d\}$ so
  that \[T(f(\xx)) = \alpha \,f(a_1x_{\sigma1}+b_1,\dots,a_d x_{\sigma d}+b_d)?\]
\end{question}
\begin{question}
  
\end{question}

\begin{bibdiv}
  \begin{biblist}

\bib{bb2}{article}{
  title=   {Polya-Schur master theorems for circular domains and their boundaries},
  author=  { Julius Borcea},
  author=  { Petter Br\"and\'en},
  author=  { Boris Shapiro},
  eprint=  {arXiv:math/0607416}}

    \bib{bbs}{article}{
    title = {Classification of hyperbolicity and stability preservers:
        the multivariate Weyl algebra case},
    author = {Julius Borcea},
    author = {Petter Br\"and\'en},
    author = {Boris Shapiro},
    eprint = {arXiv:math.CA/0606360}
}

\bib{branden-hpp}{article}{
    title = {{Polynomials with the half-plane property and matroid theory}},
    author = {P. Br\"and\'en},
    eprint = {arXiv:math.CO/0605678}}

  \bib{bbs-johnson}{article}{ 
    title= {Applications of stable
      polynomials to mixed determinants: Johnson's conjectures,
      unimodality and symmetrized Fischer products}, 
    author = {Julius Borcea},
    author = {Petter Br\"and\'en},
    author = {Boris Shapiro}, 
    eprint =  {math.SP/0607755}}

\bib{bbl}{article}{
  author =	 {Julius Borcea},
  author = {Petter Br\"and\'en},
  author = {Thomas M. Liggett},
  title =	 {Negative dependence and the geometry of polynomials},
  eprint = {arXiv:0707.2340}
}

\bib{fisk}{article}{
  title = {Polynomials, roots, and interlacing},
  author = {Steve Fisk},
  eprint = {arXiv:math.CA/0612833 }}

\bib{johnson}{article}{
  author = 	 {Johnson, C.R,},
  title = 	 {A characteristic polynomial for matrix pairs},
  journal = 	 {Linear and multilinear algebra},
  year = 	 {1989},
  volume =	 {25},
  pages =	 {289-290}
}

\bib{lax}{article}{
    title = {{The Lax conjecture is true}},
    author = {Adrian S. Lewis and Pablo A. Parrilo and Motakuri V. Ramana},
    eprint = {arXiv:math.OC/0304104}}

\bib{rahman}{book}{
  author =	 {Q. I. Rahman and G. Schmeisser},
  title = 	 {Analytic Theory of polynomials},
  publisher = 	 {Oxford Science Publications},
  year = 	 {2002},
  number =	 {26},
  series =	 {Mathematical Society Monographs}
}

\bib{wagner}{article}{
    AUTHOR = {Choe, Young-Bin},
    author = { Oxley, James G.},
    author = {Sokal, Alan D.},
    author=  {Wagner, David G.},
     TITLE = {Homogeneous multivariate polynomials with the half-plane
              property},
      NOTE = {Special issue on the Tutte polynomial},
   JOURNAL = {Adv.\ in Appl. Math.},
    VOLUME = {32},
      YEAR = {2004},
    NUMBER = {1-2},
     PAGES = {88--187},
      ISSN = {0196-8858}
}

  \end{biblist}
\end{bibdiv}

\end{document}